\documentclass[ps]{imsart}

\usepackage[utf8]{inputenc}

\usepackage[left=2.5cm,right=2.5cm,top=2cm,bottom=2cm]{geometry}
\usepackage{color}    
\usepackage{epsfig} 
\usepackage{epsf}            
\usepackage{float}           
\usepackage{amsthm}
\usepackage{graphics}
\usepackage{amsfonts}
\usepackage{amsxtra}
\usepackage{amsmath}
\usepackage{amssymb}
\usepackage{enumerate}
\usepackage{accents}
\usepackage{hyperref}
\hypersetup{colorlinks=false, linkbordercolor={1 1 1}, citebordercolor={1 1 1}, urlbordercolor={1 1 1}} 
\parskip 2mm

\newtheorem{theorem}{Theorem}
\newtheorem{lemma}[theorem]{Lemma}
\newtheorem{corollary}[theorem]{Corollary}

\newtheorem{proposition}[theorem]{Proposition}
\numberwithin{theorem}{section}
\numberwithin{equation}{section}
\numberwithin{figure}{section}
\begin{document}

\begin{frontmatter}

\title{TASEP hydrodynamics using microscopic characteristics}
\runtitle{TASEP hydrodynamics using microscopic characteristics}

\begin{aug}
  \author{Pablo A. Ferrari \thanksref{t2}\ead[label=e1]{pferrari@dm.uba.ar}}

  \address{Universidad de Buenos Aires and IMAS CONICET\\ 
           \printead{e1}}



  \thankstext{t2}{Research partially supported by Mincyt and Mathamsud LSBS-2014.}

  \runauthor{Ferrari}

\end{aug}

\begin{abstract}
The convergence of the totally asymmetric simple exclusion process to the solution of the Burgers equation is a classical result. In his seminal 1981 paper, Herman Rost proved the convergence of the density fields and local equilibrium when the limiting solution of the equation is a rarefaction fan. An important tool of his proof is the subadditive ergodic theorem. We prove his results by showing how second class particles transport the rarefaction-fan solution, as characteristics do for the Burgers equation, avoiding subadditivity. 
Along the way we show laws of large numbers for tagged particles, fluxes and second class particles, and simplify existing proofs in the shock cases. The presentation is self contained. 
\end{abstract}



\begin{keyword}[class=MSC]
\kwd[Primary ]{60K35}
\kwd{60K35}
\kwd[; secondary ]{60K35}
\end{keyword}

\begin{keyword}
\kwd{Totally asymmetric simple exclusion process}
\kwd{\LaTeXe}
\end{keyword}

\tableofcontents

\end{frontmatter}

\newcommand{\utilde}[1]{\underaccent{\tilde}{#1}}
\newcommand{\cA}{\mathcal A}
\newcommand{\cC}{\mathcal C}
\newcommand{\cD}{\mathcal D}
\newcommand{\cE}{\mathcal E}
\newcommand{\cF}{\mathcal F}
\newcommand{\cG}{\mathcal G}
\newcommand{\cH}{\mathcal H}
\newcommand{\cI}{\mathcal I}
\newcommand{\cJ}{\mathcal J}
\newcommand{\cM}{\mathcal M}
\newcommand{\cN}{\mathcal N}
\newcommand{\cP}{\mathcal P}
\newcommand{\cT}{\mathcal T}
\newcommand{\cW}{\mathcal W}
\newcommand{\cX}{\mathcal X}
\newcommand{\cY}{\mathcal Y}
\newcommand{\R}{\mathbb R}
\newcommand{\Z}{\mathbb Z}
\newcommand{\N}{\mathbb N}
\newcommand{\Q}{\mathbb Q}
\newcommand{\vep}{\varepsilon}
\newcommand{\one}{\mathbf 1}
\newcommand{\limt}{\lim_{t\to \infty}}
\newcommand{\lime}{\lim_{\vep\to 0}} 
\newcommand{\nn}{\nonumber}

\section{Introduction}

In the totally asymmetric simple exclusion process (tasep) there is at most a particle per site. Particles jump one unit to the right at rate 1, but jumps to occupied sites are forbidden. Rescaling time and space in the same way, the density of particles converges to a deterministic function which satisfies the Burgers equation. This was first noticed by Rost \cite{MR635270}, who considered an initial configuration with no particles at  positive sites and with particles in each of the remaining sites. He then takes $r$ in $[-1,1]$ and proves that (a) the number of particles at time $t$ to the right of $rt$, divided by $t$ converges almost surely when $t\to\infty$ and (b) the limit coincides with the integral between $r$ and $\infty$ of the solution of the Burgers equation at time 1, with initial condition 1 to the left of the origin and 0 to its right.  This is called convergence of the density fields. Rost also proved that the distribution of particles at time $t$ around the position $rt$ converges as $t$ grows to a product measure whose parameter is the solution of the equation at the space-time point $(r,1)$. This is called local equilibrium because the product measure is invariant for the tasep. These results were then proved for a large family of initial distributions and triggered an impressive set of work on the subject; see Section \ref{notes} later. 

The main novelty of this paper is a new proof of Rost theorem. Rost first uses the subadditive ergodic theorem to prove that the density field converges almost surely and then identifies the limit using couplings with systems of queues in tandem. Our proof shows convergence to the limit in one step, avoiding the use of subadditivity. For each $\rho\in[0,1]$ we couple the process starting with the 1-0 step Rost configuration with a process starting with a stationary product measure at density $\rho$ and show that for each time $t$ the Rost configuration dominates the stationary configuration to the left of $R_t$ and the opposite domination holds to the right of $R_t$; see Lemma \ref{dom1}. Here $R_t$ is a second class particle with respect to the stationary configuration. It is known that $R_t/t$ converges to $(1-2\rho)$ and then the result follows naturally. A colorful and conceptual aspect of the proof is that $1-2\rho$ is the speed of the characteristic of the Burgers equation carrying the density~$\rho$. 

In order to keep the paper self contained we shortly introduce the Burgers equation and the role of characteristics and the graphical construction of the tasep which induces couplings and first and second class particles. We also include a simplified proof of the hydrodynamic limit in the increasing shock case, using second class particles. Along the way we recall the law of large numbers for a tagged particle in equilibrium, which in turn implies law of large numbers for the flux of particles along moving positions and for tagged and isolated second class particles. 

Section \ref{burgers} introduces the Burgers equation and describes the role of characteristics. Section \ref{tasep} gives the graphical construction of the tasep and describes its invariant measures. Section \ref{hl} contains some heuristics for the hydrodynamic limits and states the hydrodynamic limit results. Section \ref{tagged1} contains a proof a the law of large numbers for the tagged particle. Section \ref{coupling1} includes the graphical construction of the coupling and describes the two-class system associated to a coupling of two processes with ordered initial configurations. Section \ref{six} contains the proof of the law of large numbers for the flux and the second class particles. In Section \ref{seven} we prove the hydrodynamic limit for the increasing shock and in  Section \ref{eight} we prove Rost theorem, the hydrodynamics in the the rarefaction fan. Finally Section \ref{notes} includes comments and references.

\section{The Burgers equation}
\label{burgers} 
The one-dimensional Burgers equation is used as a model of transport. The function $u(r,t)\in[0,1]$ represents the density of particles at the space position $r\in \R$ at time $t\in\R^+$.
The density must satisfy
\begin{equation}
  \label{burgers}
  \frac{\partial u}{\partial t} = - \frac{\partial [u(1-u)]}{ \partial r}
\end{equation}
The initial value problem for \eqref{burgers} is to find a solution under the initial condition $u(r,0)=u_0(r),\;r\in\R$,
where  $u_0:\R\to[0,1]$ is given. 
In this note we only consider the following family of initial conditions:
\begin{align}
  \label{url}
u_0(r) = u^{\lambda,\rho}(r):=
\begin{cases}
  \lambda&\hbox{if }r\le 0\\
 \rho&\hbox{if }r> 0 
\end{cases}
\end{align}
where $\rho,\lambda\in[0,1]$.
Lax \cite{MR0350216} explains how to treat this case.
Differentiating \eqref{burgers} we get
\begin{equation}
\frac{\partial u }{ \partial t}=  -(1-2u) \frac{\partial u  }{ \partial r} 
\end{equation}
so that $u$ is constant along $w(t)$ with $w(0)=r$, the trajectory satisfying
$\frac{d}{dt}w= (1-2u)$.
That is, $u$ propagates with
speed $(1-2u)$: $u(w(t),t)= u_0(w(0))$. 
These trajectories are called \emph{characteristics}. 
If different characteristics meet, carrying two different solutions to
the same point, then the solution has a shock or discontinuity at that position.
In our case the discontinuity is present in the initial
condition.  The cases $\lambda<\rho$ and $\lambda>\rho$ are qualitative different. 

\paragraph{Shock case}  When $\lambda<\rho$ the characteristics starting at $r>0$ and $-r$ have
speed $(1-2\rho)$ and $(1-2\lambda)$ respectively and meet at time $t(r)=
r/(\rho-\lambda)$ at position $(1-\lambda-\rho)r/(\rho-\lambda)$. 
\begin{figure}[h]
\centering
\includegraphics[
clip=true, 
scale=.8
]
{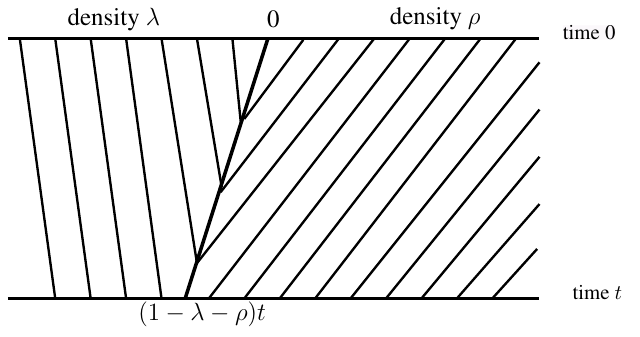} 
\caption{Shocks and characteristics in the Burgers equation. The characteristics starting at
$r$ and $-r$ that go at velocity $1-2\rho$ and $1-2\lambda$ respectively with $\rho>\lambda$. The
center line is the shock that travels at velocity $1-\rho-\lambda$.}
\label{fig2}
\end{figure}%
Take $a<b$
large enough to guarantee that the shock is inside $[a,b]$ for times in $[0,t]$. By conservation of mass:
\begin{align}
  \label{amb}
\frac{d}{dt} \int_a^b u(r,t)\,dr\, =\, u(a,t)(1-u(a,t))\,-\,u(b,t)(1-u(b,t))
\end{align}
Since $\int_a^b u(r,t)\,dr= \lambda (y_t-a)+\rho (b-y_t)$, where $y_t$ is the position of the shock at time $t$, we have
\[
y'_t (\lambda-\rho) = \lambda(1-\lambda) -\rho(1-\rho) 
\]
and $y_t = (1-\lambda-\rho)t$.
We conclude that for $\lambda<\rho$, the solution of the initial value problem $u(r,t)$ is $\rho$
for $r>vt$ and $\lambda$ for $r<vt$, that is,
\[
u(r,t)= u^{\lambda,\rho}(r-vt).
\]

\paragraph{The rarefaction fan} When $\lambda>\rho$ the characteristics emanating at the left of the origin have speed $(1-2\lambda)<(1-2\rho)$, the speed to the right
and there is a family of characteristics emanating from the origin with speeds $(1-2\alpha)$ for $\lambda\ge\alpha\ge\rho$. 
\begin{figure}[h]
\centering
\includegraphics[
clip=true, 
width=7cm
]
{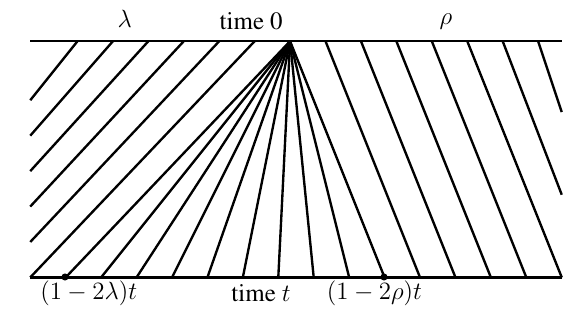} 
\caption{The rarefaction fan. Here $\lambda>\rho$.}
\label{fig3}
\end{figure}%
The solution is then
\begin{align}
  \label{rarefaction-solution}
u(r,t) =
\begin{cases}
  \lambda & \hbox{if } r<(1-2\lambda)t\\[2mm]
\displaystyle{\frac{t-r}{2t}}  & \hbox{if } (1-2\lambda)t\le r\le (1-2\rho)t\\[2mm]
\rho &\hbox{if } r>(1-2\rho)t
\end{cases}
\end{align}
The characteristic starting at the origin with speed $(1-2\alpha)$ carries the solution $\alpha$:
\begin{align}
  u\big((1-2\alpha)t,t\big) = \alpha,\qquad \lambda\ge \alpha\ge \rho.
\end{align}

The above solution is a \emph{weak solution}, that is, for all   
$\Phi\in C_0^\infty$ with compact support, 
\begin{equation}
\int\int \left(\frac{\partial\Phi }{ \partial t} u + \frac{\partial\Phi }{
\partial r}u(1-u) \right) dr dt = 0.
\end{equation}
The solution may be not unique, but \eqref{rarefaction-solution} comes
as a limit when $\beta \to 0$ of the unique solution of the
(viscid) Burgers equation
\begin{equation}
\label{2.2}
\frac{\partial u }{ \partial t} 
=- \frac{\partial [u(1-u)]}{ \partial r}+ \beta \frac{\partial^2 u }{ \partial r^2}.
\end{equation} 
This solution, called \emph{entropic}, is selected by the hydrodynamic limit of the tasep, as we will see.

\section{The tasep}
\label{tasep}

We construct now the tasep.
Call \emph{sites} the elements of $\Z$ and \emph{configurations} the elements of the space $\{0,1\}^\Z$, endowed with the product topology. When $\eta(x)=1$ we say that $\eta$ has a \emph{particle} at site $x$, otherwise there is a \emph{hole}.  


\paragraph{Harris graphical construction} We define directly the graphical construction of the process, a method due to Harris \cite{MR0488377}. The process in $\{0,1\}^\Z$ is given as a function of an initial configuration $\eta$ and a Poisson process $\omega$ on $\Z\times\R^+$ with rate 1; $\omega$ is a random discrete subset of $\Z\times\R$. When $(x,t)\in\omega$ we say that there is an arrow $x\to x+1$ at time $t$. 
Fix a time $T>0$. 
\begin{figure}[h]
\centering
\includegraphics[
clip=true, 
scale=.8
]
{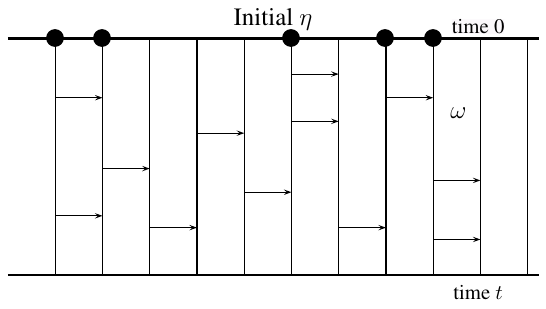} 
\caption{A typical $\omega$, represented by arrows and the initial configuration $\eta$, where  particles are represented by dots. }
\label{fig4}
\end{figure}%
For almost all $\omega$  there is a double infinite sequence of sites $x_i=x_i(\omega)$, $i\in \Z$ with no arrows $x_i\to x_i+1$ in $(0,T)$. The space $\Z$ is then partitioned into finite boxes
$[x_i+1,x_{i+1}]\cap \Z$ with no arrows connecting boxes in the time interval $[0,T]$. Take $\omega$ satisfying this property and an arbitrary initial configuration $\eta$ and construct $\eta_t$, $0\le t\le T$, as a function of $\eta$ and $\omega$, as follows. 

Since the boxes are finite, we can label the arrows inside each box by order of
appearance. Take a box. If the first arrow in the box is $(x,t)$ and at time $t-$ there is a
particle at $x$ and no particle at $x+1$, then the particle follows the arrow $x\to x+1$ so that at time $t$ there is a particle at $x+1$ and no particle at
$x$.
If before the arrow from $x$ to $x+1$ there is a different event (two particles, two holes or a particle at $x+1$ and no particle at $x$), then nothing happens: the configuration after the arrow is exactly the same as before.
Repeat the procedure for the following arrows until the last arrow in the box. Proceed to next box and obtain a particle configuration depending on the initial $\eta$ and the Poisson realization $\omega$, denoted $\eta_{t}[\eta,\omega]$, $0\le t\le T$.
\begin{figure}[h]
\centering
\includegraphics[
clip=true, 
scale=.8
]
{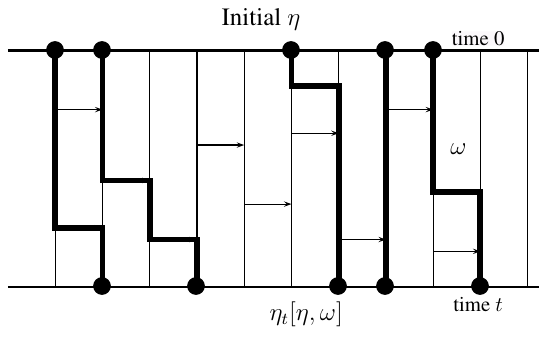} 
\caption{A typical construction. Particles follow arrows when destination site is empty.}
\label{fig5}
\end{figure}%
For times greater than $T$, use $\eta_{T}$ as initial configuration and repeat the procedure to construct
the process between $T$ and $2 T$, using the arrows of $\omega$ with times in $[T,2T]$ and so on. In this way we have constructed the process 
\[
(\eta_t[\eta,\omega]:t\ge0).
\]
The process satisfies the almost sure Markov property
\begin{align}
  \label{asmp}
\eta_{t+s}[\eta,\omega]= \eta_s\big[\eta_t[\eta,\omega],\tau_t\omega\big],
\end{align}
where $\tau_t\omega :=  \{(x,s): (x,t+s)\in\omega\}$ has the same distribution as $\omega$ and it is independent of $\omega\cap(\Z\times[0,t])$, by the properties of the Poisson process $\omega$.
This implies that the process $\eta_t$ is Markov. 
Usually we omit the dependence on $\omega$ in the notation.

\paragraph{\bf Product measures}  
Let 
\begin{align}
\label{uuu}
  U=(U(x):x\in\Z) := \;\hbox{iid random variables uniformly distributed in }[0,1].
\end{align}
Assume that $U$ is independent of $\omega$. 
For each $\rho\in[0,1]$ define $\eta^\rho=\eta^\rho[U]$ by
\begin{align}
\label{etalambda}
  \eta^\rho(x) := \one \{U(x)<\rho\}.
\end{align}
where $\one B$ is the indicator function of  $B$. 
All configurations in this paper defined in function of $U$ are naturally coupled by using the \emph{same} uniform random variables \eqref{uuu}; we drop the dependency of $U$ to lighten the notation. 
The distribution of $\eta^\rho$ is a Bernoulli product measure.
Define 
\begin{align}
  \label{fa}
f_A(\eta):= \prod_{x\in A} \eta(x).
\end{align}
If $\zeta$ is a random configuration in $\{0,1\}^\Z$, then $(Ef_A(\zeta):A\subset \Z,$ finite$)$ characterizes the distribution of $\zeta$. In particular, the distribution of $\eta^\rho$ is characterized by $E f_A(\eta^\rho)= \rho^{|A|}$, where $|A|$ is the cardinal of $A$.

Denote 
\begin{align}
  \label{etarhot}
\eta^\rho_t := \eta_t[\eta^\rho,\omega]
\end{align}
The configuration $\eta^\rho_t$ is a function of $U$ and $\omega$. 
We denote  $P$ and $E$ the probability and expectation associated to the probability space induced by the independent random elements $U$ and $\omega$.

\begin{lemma}
\label{nu-rho-invariant}
For each $\rho\in[0,1]$,  the distribution of $\eta^\rho$ 
is invariant for the tasep. That is, for any finite $A\subset \Z$ we have 
$E(f_A(\eta^\rho_t)) = 
\rho^{|A|}$, for all $t\ge0$.
\end{lemma}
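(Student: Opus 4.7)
My plan is to identify the infinitesimal generator $L$ of the tasep from the Harris construction, to verify by direct computation that $E[Lf_A(\eta^\rho)]=0$, and then to propagate this instantaneous balance to all times via the Kolmogorov forward equation. From the graphical construction, in an infinitesimal time $dt$ the arrow at $(x,x+1)$ appears with probability $dt$ and produces a swap precisely when $\eta(x)(1-\eta(x+1))=1$, so for any cylinder function $f$ depending on only finitely many coordinates,
\[
Lf(\eta)=\sum_{x\in\Z}\eta(x)(1-\eta(x+1))\bigl[f(\eta^{x,x+1})-f(\eta)\bigr]
\]
is a finite sum.

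Next I compute $Lf_A$. The $(x,x+1)$-swap leaves $\prod_{z\in A}\eta(z)$ unchanged whenever both or neither of $x,x+1$ lie in $A$, so only the two asymmetric cases contribute. Using $\eta(y)^2=\eta(y)$ to simplify, the calculation gives
\[
Lf_A(\eta)=\sum_{\substack{x\notin A\\ x+1\in A}}\eta(x)(1-\eta(x+1))\!\!\prod_{z\in A\setminus\{x+1\}}\!\!\eta(z)\;-\sum_{\substack{x\in A\\ x+1\notin A}}(1-\eta(x+1))\prod_{z\in A}\eta(z).
\]
Taking expectation with respect to $\eta^\rho$ and exploiting independence of the coordinates $\eta^\rho(y)$, every term in either sum contributes $\rho^{|A|}(1-\rho)$ in absolute value. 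For finite $A$, the number of ``entry'' indices $\{x:x\notin A,\ x+1\in A\}$ equals the number of ``exit'' indices $\{x:x\in A,\ x+1\notin A\}$ (both equal the number of maximal intervals of $A$), so the two sums cancel and $E[Lf_A(\eta^\rho)]=0$.

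To propagate the balance to all times, set $g_t(A):=E[f_A(\eta_t^\rho)]$; the Kolmogorov equation gives the closed linear system $\dot g_t(A)=\sum_B c(A,B)\,g_t(B)$, in which $L$ couples each $A$ to only finitely many $B$'s with bounded coefficients. The same algebra shows that $g_t(A)\equiv\rho^{|A|}$ solves this system with the correct initial data, and uniqueness of bounded solutions (by a Gronwall argument applied to $g_t(A)-\rho^{|A|}$) then forces the claimed equality. The main technical obstacle is justifying this last step cleanly, since $L$ is unbounded on the full cylinder algebra: either one invokes standard Hille--Yosida theory for interacting particle systems (the cylinder functions form a core), or one approximates the $\Z$-tasep by the $\Z/L\Z$-tasep --- for which the uniform measure on $k$-particle configurations is plainly invariant by the doubly-stochastic character of its finite-dimensional generator, and converges weakly to the distribution of $\eta^\rho$ as $L\to\infty$ with $k/L\to\rho$ --- and uses finite propagation of information in the Harris construction to compare the two on any fixed cylinder.
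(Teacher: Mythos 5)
Your generator computation is correct: the swap $(x,x+1)$ alters $f_A$ only when exactly one endpoint lies in $A$, the rate factor $\eta(x)(1-\eta(x+1))$ kills the terms containing $\eta(x+1)$, and under the product measure each surviving term has expectation $(1-\rho)\rho^{|A|}$; since the ``entry'' indices $\{x\notin A,\,x+1\in A\}$ and ``exit'' indices $\{x\in A,\,x+1\notin A\}$ are equinumerous (one of each per maximal block of $A$), the cancellation $E[Lf_A(\eta^\rho)]=0$ holds. Note, however, that the paper does not prove this lemma at all --- it simply cites Liggett's 1976 paper --- so you are not reproducing the paper's argument but rather supplying the standard proof that the citation points to. You have also correctly identified the only delicate point, namely upgrading $E[Lf_A(\eta^\rho)]=0$ to invariance for all $t$: this does require either the semigroup/core machinery (cylinder functions form a core for $L$, so $\int Lf\,d\mu=0$ for all cylinder $f$ characterizes invariance) or your finite-volume alternative, which is sound --- on $\Z/L\Z$ the rate into a configuration (its number of $01$ patterns) equals the rate out (its number of $10$ patterns), so the uniform measure on each particle-number level set is invariant, the hypergeometric marginals converge to Bernoulli($\rho$), and the Harris construction's finite-range dependence lets you transfer the conclusion to $\Z$. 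Either route completes the proof; as written the last step is a sketch that would need to be fleshed out, but there is no gap in the idea.
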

This lemma is proved in Liggett \cite{MR0418291}. The configurations $\zeta^{(n)}(x) := \one\{x\ge n\}$ are frozen because all particles are blocked. In the same paper Liggett shows that all the invariant measures are combination of the Bernoulli product measures and the blocking measures, those concentrating mass on the frozen configurations $\eta^{(n)}$.

\section{The hydrodynamic limit}
\label{hl}

\paragraph{Heuristic derivation of Burgers equation from tasep} 
Using the forwards Kolmogorov equation for the function $f(\eta) = \eta(x)$ we get
\begin{align}
\label{de1}
  \frac{d}{dt} E(\eta_t(x)) = E\big[-\eta_t(x)(1-\eta_t(x+1))+ \eta_t(x-1)(1-\eta_t(x))\big], 
\end{align}
Fix an $\vep>0$ which will go later to zero and define 
\[
u^\vep(r,t):=E [\eta_{\vep^{-1}t}(\vep^{-1}r)],
\]
where $\vep^{-1}r$ is an abuse of notation for integer part of
$\vep^{-1}r$. 
Putting the $\vep$'s in \eqref{de1} we get
\begin{align}
\frac{ d}{ dt} u^\vep(r,t))
=\vep^{-1}E\bigl[&
-\eta_{t\vep^{-1}}(r\vep^{-1})(1-\eta_{t\vep^{-1}}(r\vep^{-1}+1))\nonumber\\
&\qquad +\,\eta_{t\vep^{-1}}(r\vep^{-1}-1)\,(1-\eta_{t\vep^{-1}}(r\vep^{-1}))\bigr].\label{de2}
\end{align}
Assume that there exist a limit 
\[
u(r,t):=\lime u^\vep(r,t) 
\]
and that the distribution of $\eta_{\vep^{-1}t}$ around $\vep^{-1}r$ is approximately product, that is,
\[
\lime E \big[\eta_{t\vep^{-1}}(r\vep^{-1})\,\eta_{t\vep^{-1}}(r\vep^{-1}+1)\big]\;=\; (u(r,t))^2.
\]
Assume further that $u(r,t)$ is differentiable in $r$. In this case, the right hand side of \eqref{de2} must converge to minus the derivative of $u(r,t)(1-u(r,t))$, that is, the limiting $u(r,t)$ must satisfy the Burgers equation. This heuristic argument may also be a script of a proof of the convergence of the tasep density to a solution of the Burgers equation. Instead, we show directly the convergence in the terms described by \eqref{aib} and \eqref{local-equilibrium} later.

\paragraph{Hydrodynamics limit. General case} Consider the Burgers equation with initial data $u_0$ such that there exists a unique entropic  weak solution $u(r,t)$ for the initial value problem~\eqref{burgers}-\eqref{url}. Take the uniform random variables $U$ defined in \eqref{uuu} and define
\begin{align}
\label{uue}
  \zeta^\vep(x) := \one \{U(x)\le u_0(\vep x)\}.
\end{align}
That is, for each $\vep>0$, the random configuration $\zeta^\vep$ is a sequence of independent Bernoulli random variables with varying parameter induced by $u_0$ for the mesh $\vep$. 
Let $\zeta^\vep_t$ be the tasep with random initial configuration $\zeta^\vep$:
\begin{align}
  \label{eut}
\zeta^\vep_t:= \eta_t[\zeta^\vep,\omega].
\end{align}
Denote  $\tau_z$ the translation operator by $z$, defined by $(\tau_z\eta)(x) = \eta(\lfloor x+z\rfloor)$, here $\lfloor z\rfloor$ is the integer part of $z$.

\begin{theorem}  [Hydrodynamic limits by several authors]
\label{density-fields}

Let $u(r,t)$ be the solution of the Burgers equation with initial condition $u_0$. Let $\zeta^\vep$ be given by \eqref{uue} and $\zeta^\vep_t$ be the tasep with initial condition $\zeta^\vep$ defined in \eqref{eut}. Then, 

\noindent\emph{Convergence of the  density fields. } 
For all real numbers $a<b$ and for all $t\ge 0$, 
\begin{align}
  \label{aib}
\lime \vep \sum_{x: a\le \vep x\le b}   \zeta^\vep_{\vep^{-1}t}(x)   = \int_a^b  u(r,t) dr,\qquad \hbox{a.s.}
\end{align}
\emph{Local-equilibrium. } At the continuity points of $u(r,t)$,
\begin{align}
\label{local-equilibrium}
\lime E[f_{A}(\tau_{\vep^{-1}r}\zeta^\vep_{\vep^{-1}t})] 
\;= u(r,t)^{|A|}.
\end{align}

\end{theorem}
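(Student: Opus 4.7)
The plan is to treat the step initial data \eqref{url} in three cases according to the order of $\lambda$ and $\rho$: the stationary case $\lambda=\rho$, the shock case $\lambda<\rho$, and the rarefaction fan $\lambda>\rho$. The stationary case is immediate. Indeed, $\zeta^\vep$ has law $\eta^\rho$ independent of $\vep$, so by Lemma \ref{nu-rho-invariant} $\zeta^\vep_{\vep^{-1}t}\stackrel{d}{=}\eta^\rho$; then \eqref{aib} follows from the strong law of large numbers for the i.i.d.\ Bernoulli field $(\eta^\rho(x):x\in\Z)$, and \eqref{local-equilibrium} is the identity $Ef_A(\eta^\rho)=\rho^{|A|}$.

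In the shock case $\lambda<\rho$, to be carried out in Section \ref{seven}, I would use the three-way coupling induced by the common uniforms \eqref{uuu}: since $\lambda\le u_0(\vep x)\le\rho$ pointwise, one has $\eta^\lambda(x)\le\zeta^\vep(x)\le\eta^\rho(x)$ for every $x$, and attractiveness of the Harris graphical construction preserves this order in time. Comparing $\zeta^\vep_t$ with the two stationary profiles $\eta^\lambda_t$ and $\eta^\rho_t$ through conservation of mass, and using the LLN for particle fluxes across moving positions from Section \ref{six}, concentrates the discrepancies in the rescaled picture on the shock line $r=vt$ with $v=1-\lambda-\rho$. Then \eqref{aib} follows by splitting $[a,b]$ around the shock and applying the stationary case on each side, and \eqref{local-equilibrium} at any continuity point $r\ne vt$ follows because with probability tending to $1$ the block $A+\vep^{-1}r$ lies entirely on one side of the shock, where $\zeta^\vep_{\vep^{-1}t}$ agrees with $\eta^\lambda_{\vep^{-1}t}$ or $\eta^\rho_{\vep^{-1}t}$.

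The rarefaction case $\lambda>\rho$ is the new contribution and is carried out in Section \ref{eight}. For each $\alpha\in[\rho,\lambda]$ I would build $\eta^\alpha$ from the same uniforms \eqref{uuu}. Since $u_0(\vep x)\ge\alpha$ for $x\le 0$ and $u_0(\vep x)\le\alpha$ for $x>0$, the initial discrepancy $\zeta^\vep-\eta^\alpha$ is nonnegative on the left and nonpositive on the right of the origin. Lemma \ref{dom1} then says that the dynamics collapses this two-sided profile into a single separating interface: there is a second class particle $R^\alpha_t$ starting at the origin with respect to $\eta^\alpha$ such that $\zeta^\vep_t\ge\eta^\alpha_t$ to the left of $R^\alpha_t$ and $\zeta^\vep_t\le\eta^\alpha_t$ to its right. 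The LLN for an isolated second class particle from Section \ref{six} gives $R^\alpha_t/t\to 1-2\alpha$ a.s., which is exactly the Burgers characteristic speed carrying the value $\alpha$. Given $r$ with $(1-2\lambda)t<r<(1-2\rho)t$, I would set $\alpha=(t-r)/(2t)$ and choose $\alpha'<\alpha<\alpha''$ in $[\rho,\lambda]$: with probability tending to $1$ the block $A+\vep^{-1}r$ lies between $R^{\alpha''}_{\vep^{-1}t}$ and $R^{\alpha'}_{\vep^{-1}t}$, so $\eta^{\alpha''}_{\vep^{-1}t}\le\zeta^\vep_{\vep^{-1}t}\le\eta^{\alpha'}_{\vep^{-1}t}$ on the block. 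Letting $\alpha',\alpha''\to\alpha$ and using the stationary case produces \eqref{local-equilibrium} with value $u(r,t)=\alpha$, and integrating the same sandwich over $r\in[a,b]$, split into the fan and the flanking constant regions, yields \eqref{aib}.

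The main obstacle I anticipate is Lemma \ref{dom1} itself, used as a black box above. Its proof requires showing that positive and negative discrepancies between $\zeta^\vep_t$ and $\eta^\alpha_t$ annihilate rather than cross each other under the graphical construction, so that the sign of $\zeta^\vep_t(\cdot)-\eta^\alpha_t(\cdot)$ remains monotone in $x$ at every time and the separating interface moves as an isolated second-class particle. This is an arrow-by-arrow analysis of the coupled dynamics and is where the novelty of the proof really lives; once it is established, the remainder reduces to the sandwich described above combined with the already-stated LLN for second class particles and the stationary law of large numbers.
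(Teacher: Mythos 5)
Your proposal is correct and follows essentially the same route as the paper: the stationary case by invariance, the shock case by locating a microscopic shock via flux laws of large numbers, and the rarefaction fan by the sandwich $\eta^{\alpha''}_t\le\eta^{\lambda,\rho}_t\le\eta^{\alpha'}_t$ on the block between $R^{\alpha''}_t$ and $R^{\alpha'}_t$, combined with $R^\alpha_t/t\to 1-2\alpha$ and a limit $\alpha',\alpha''\to (1-r)/2$. This is precisely the architecture of Sections \ref{seven} and \ref{eight}.

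The one place where you diverge is the anticipated proof of Lemma \ref{dom1}, which you leave as a black box and describe as requiring an arrow-by-arrow argument that positive and negative discrepancies between $\zeta^\vep_t$ and $\eta^\alpha_t$ annihilate without crossing. The paper never confronts a two-signed discrepancy field. Because all configurations are built from the same uniforms, the couplings it actually uses are globally ordered: $\eta^\rho\le\eta^{\lambda,\rho}\le\eta^\lambda$, so the discrepancies of $\eta^{\lambda,\rho}$ relative to $\eta^\rho$ are all nonnegative and initially supported on $x\le 0$, and the rightmost one (placed at the origin) evolves exactly as the isolated second class particle $R^\rho_t$, since it is blind to the $\xi$ particles on its left; this gives the exact identities \eqref{a382c}. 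The comparison with the intermediate density $\alpha$ is then obtained indirectly, writing $\eta^{\lambda,\rho}_t\le\eta^{\lambda,\alpha}_t=\eta^\alpha_t$ for $x>R^\alpha_t$ and $\eta^\alpha_t=\eta^{\alpha,\rho}_t\le\eta^{\lambda,\rho}_t$ for $x<R^\alpha_t$, using only attractivity plus \eqref{a382c} applied to the pairs $(\lambda,\alpha)$ and $(\alpha,\rho)$. So the obstacle you flag dissolves if you route the comparison through $\eta^{\lambda,\alpha}$ and $\eta^{\alpha,\rho}$ rather than coupling $\zeta^\vep$ to $\eta^\alpha$ directly.
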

The limit \eqref{local-equilibrium} gives weak convergence of the particle distribution at the points of
continuity of $u(r,t)$ to the distribution of $\eta^{u(r,t)}$, which is an invariant measure. When $A=\{0\}$, the limit \eqref{local-equilibrium} is the so called \emph{density profile}:
\begin{align}
  \label{4.1a}
\lime E[\zeta^\vep_{\vep^{-1}t}(\vep^{-1}r)] \;= \; u(r,t),
\end{align}
ignoring the integer parts, as abuse of notation. In Section \ref{notes} we give references to the proof of this Theorem.

\paragraph{Hydrodynamic limit. Shock case} Consider the case corresponding to $u_0= u^{\lambda,\rho}$ and $t=1$. 
Let $\lambda,\rho\in[0,1]$ and $\eta^{\lambda,\rho}=\eta^{\lambda,\rho}[U]$ be defined by
\begin{align}
  \label{etarl1}
\eta^{\lambda,\rho}(x) :=
\begin{cases}
  \one\{U(x) \le \lambda,\} &\hbox{if } x\le0\\
\one\{U(x) \le \rho,\} &\hbox{if } x>0.
\end{cases}
\end{align}
where $U(x)$ defined in \eqref{uuu} are the same we used to define $\eta^\rho$. As before we denote
\begin{align}
  \eta^{\lambda,\rho}_t:= \eta_t[\eta^{\lambda,\rho},\omega], \nn
\end{align}
 a function of $U$ and $\omega$. In the rest of the paper we fix macroscopic time equal to 1 and use $t$ as scaling parameter.

We prove the following theorem. The result is a particular case of Theorem \ref{density-fields}, which is known but the methods are new for the rarefaction case.
\begin{theorem}
  \label{t4.2}
  For all real numbers $a<b$, 
\begin{align}
  \label{aib1}
\limt \frac1t \sum_{x: at\le  x\le bt}   \eta^{\lambda,\rho}_t(x)   = \int_a^b  u^{\lambda,\rho}(r,1) dr,\qquad \hbox{a.s.}
\end{align}
 At the continuity points of $u^{\lambda,\rho}(\cdot,1)$, we have
\begin{align}
\label{local-equilibrium1}
\limt E[f_{A}(\tau_{tr}\eta^{\lambda,\rho}_{t})] 
\;= u(r,1)^{|A|}.
\end{align}
\end{theorem}

\paragraph{Sketch of proof of Theorem \ref{t4.2}} 
The proofs are based on the coupling of the tasep obtained by using the same $U$ and $\omega$ for \emph{all} initial conditions. 
A crucial property of the coupling is attractivity, meaning that initial coordinate-wise ordered configurations keep their order under the coupled evolution. In turn, attractivity permits to describe the system in terms of first and second class particles, a tool largely used in the literature. During the proof we will prove laws of large numbers for (a) a tagged particle for the stationary process $\eta^\lambda_t$, (b) the flux of $\eta^\lambda_t$ particles along a traveler with constant speed, (c) a second class particle for the process with initial shock configuration $\eta^{\lambda,\rho}$ with $\lambda<\rho$ and (d) a second class particle for the stationary process $\eta^\lambda_t$. The main novelty is the microscopic counterpart of Figure \ref{fig3}.

\section{The tagged particle}
\label{tagged1}
Take a configuration $\eta$ with infinitely many particles to the left and right of the origin and tag its particles as follows:
\begin{align}
  X(i)[\eta]:= 
  \begin{cases}
 \max\{x\le 0: \eta(x)=1\}&\hbox{if } i=0\\
\min\{x>X(i-1):\eta(x)=1\}&\hbox{if } i>0\\
\max\{x<X(i+1): \eta(x)=1\}&\hbox{if } i<0.
  \end{cases}
\end{align}
We are interested in configurations with a particle at the origin. So, define
\begin{align}
  \label{tilde-eta}
\tilde\eta(x) :=
\begin{cases}
  1 &\hbox{ if }x=0\\
\eta(x)&\hbox{ otherwise}
\end{cases};
\qquad\qquad \tilde\eta_t := \eta_t[\tilde\eta,\omega].
\end{align}
The positions of the particles at time $t$ can be recovered from the graphical construction by following the thick trajectories, see Figure \ref{tagged-particles}. Call $X_t(i)[\tilde\eta,\omega]$ the position of the $i$-th particle at time $t$; when $\eta$ and $\omega$ are understood we just denote $X_t(i)$. Call $X_t:=X_t(0)$ the position of the tagged particle initially at the origin and define the process as seen from that tagged particle by
\begin{align}
  \label{tpp}
\tau_{X_t}\eta_t[\tilde\eta,\omega].
\end{align}
\begin{figure}[h]
\centering
\includegraphics[
clip=true, 
scale=.8
]
{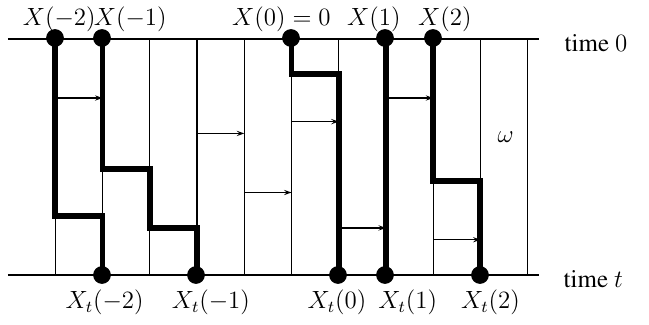} 
\caption{Trajectories of the tagged particles.}
\label{tagged-particles}
\end{figure}%
Add a particle to the configuration $\eta^\rho$ as in \eqref{tilde-eta} to get $\tilde\eta^\rho$. The law of $\tilde\eta^\rho$ is the Bernoulli product measure conditioned to have a particle at the origin.  The distribution of $\tilde\eta^\rho$ is invariant for the process as seen from the tagged particle:  $\tau_{X_t}\tilde\eta^\rho_t$ has the same distribution as $\tilde\eta^{\rho}$ for all $t\ge 0$,   see \cite{MR866349}, for instance. This invariance is crucial in the two alternative proofs of the law of large numbers of the next proposition but it is not necessary for the rest of the arguments of this paper.

\begin{proposition}[Law of large numbers for the tagged particle]
  \label{lln-tagged}
Let $X_t$ be the position of the tagged particle initially at the origin for the process with random initial configuration $\tilde\eta^\rho$. Then,
\begin{equation}
\label{lln-burke}
\limt\frac{X_{t}}t=(1-\rho), \quad \hbox{a.s.}
\end{equation}
\end{proposition}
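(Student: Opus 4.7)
My plan is to decompose the increasing counting process $X_t$ into a martingale plus a predictable compensator, identify the long-run growth rate of the compensator via an ergodic theorem for the tagged-particle-centered process, and combine a martingale SLLN with Birkhoff to conclude.

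First I would use the graphical construction to identify the jump mechanism of $X_t$: the tagged particle advances by one precisely at those Poisson arrow times $s$ at site $X_{s-}$ for which the right neighbour is empty, i.e. $\tilde\eta^\rho_s(X_{s-}+1)=0$. Writing $\eta^*_s:=\tau_{X_s}\tilde\eta^\rho_s$ for the configuration seen from the tagged particle, the process $X_t$ is therefore a counting process whose predictable intensity with respect to the natural filtration $(\cF_t)$ is $s\mapsto 1-\eta^*_{s-}(1)$. Consequently
\[
M_t \;:=\; X_t-\int_0^t (1-\eta^*_s(1))\,ds
\]
is a mean-zero $(\cF_t)$-martingale whose predictable quadratic variation is bounded by $t$. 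A Doob-inequality plus Borel--Cantelli argument over integer times $t\in\N$, combined with monotonicity of $X_t$ to fill in the gaps, yields $M_t/t\to 0$ almost surely.

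Second, Lemma \ref{tilde-eta-rho-invariant} states that $(\eta^*_s)_{s\ge0}$ is a stationary Markov process with one-dimensional marginal $\tilde\eta^\rho$. Since forcing a particle at the origin does not affect the marginal at site $1$, we have $E[1-\eta^*_s(1)]=1-\rho$. Birkhoff's ergodic theorem then gives
\[
\lim_{t\to\infty}\frac{1}{t}\int_0^t (1-\eta^*_s(1))\,ds \;=\; 1-\rho \qquad\text{a.s.},
\]
and combining this with $M_t/t\to 0$ delivers $X_t/t\to 1-\rho$ almost surely.

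The main obstacle is upgrading Birkhoff's theorem from mean convergence to the almost-sure statement, which requires the stationary law $\tilde\eta^\rho$ to be \emph{ergodic} (and not merely invariant) for the tagged-particle Markov process. The cleanest way to handle this is to pass to the gap representation: under $\tilde\eta^\rho$ the inter-particle gaps are i.i.d.\ Geometric$(\rho)$ (a Bernoulli shift), and since the tagged dynamics acts locally and shift-equivariantly on these gaps, extremality of the i.i.d.\ law translates into ergodicity of the initial condition. Alternatively — and this is the reading suggested by the label \eqref{lln-burke} — one invokes a Burke-type output theorem for the tandem of $\cdot/M/1$ queues defined by the gaps, which gives directly that the jump process of the tagged particle is Poisson of rate $1-\rho$, at which point the LLN is just the classical SLLN for Poisson processes and no martingale nor ergodic theorem is needed.
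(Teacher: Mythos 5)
Your proposal is correct and matches the paper's own (sketch) proof: the paper offers exactly the two routes you describe --- Burke's theorem giving that $(X_t)_{t\ge0}$ is a Poisson process of rate $1-\rho$, or Saada's ergodicity of the environment process $\tau_{X_t}\tilde\eta^\rho_t$, of which your martingale-plus-compensator decomposition combined with Birkhoff is the standard implementation. The one point to be careful about is the obstacle you yourself flag: what is needed is \emph{temporal} ergodicity of the stationary Markov process seen from the tagged particle, which does not follow formally from the i.i.d.\ (Bernoulli-shift) structure of the geometric gaps under spatial translations; that implication is precisely the content of Saada's theorem, which the paper likewise cites rather than proves.
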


\begin{proof}[Sketch proof]
A proof based in Burke's theorem \cite{MR0083416} goes as follows. Think that the particles are servers and the holes are customers of a system of infinitely many queues in series so that $X_t(i)$ is the position of server $i$ at time $t$, $i\in\Z$ with $X_t(0)=X_t$. Let the block of successive holes to the right of $X_t(i)$ be the queue of server $i$ at time $t$. Each time server-$i$ jumps to the right, a customer is served and goes to the queue of server-$(i-1)$. Burke's theorem says that if the initial random configuration is $\tilde\eta^\rho$, then the process $(X_t,t\ge 0)$ is a Poisson process of rate $(1-\rho)$. This fact was observed by Kesten in Example 3.2 of the historical Spitzer's 1970 paper \cite{MR0268959}; see  \cite{MR832016} or \cite{MR1385350} for proofs in this context. As a corollary we get the law of large numbers \eqref{lln-burke}.

Alternatively, Saada \cite{MR877609} proves that the process  $(\tau_{X_t}\tilde\eta^\rho:t\ge0)$ is ergodic, which in turn implies the law of large numbers; this argument avoids the use of Burke's theorem.
\end{proof}

\section{Coupling and two-class tasep}
\label{coupling1}

The graphical construction provides a natural coupling of the tasep starting with two or more different configurations. Let $\eta,\eta'$ be initial configurations and define the coupling
\[
\big((\eta_t,\eta'_t): t\ge0\big):=\big((\eta_t[\eta,\omega], \eta_t[\eta',\omega]): t\ge0\big).
\]
\begin{figure}[h]
\centering
\includegraphics[
clip=true, 
scale=.7
]
{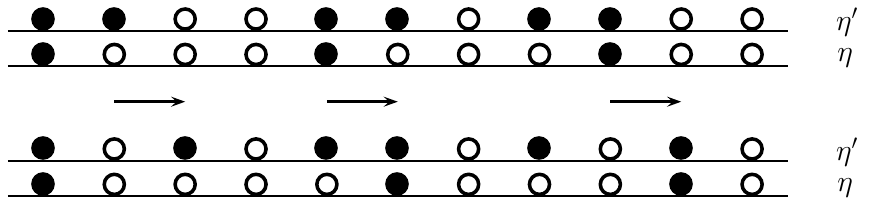} 
\caption{Coupling. Configurations  $\eta'$ and $\eta$ before and after
3 possible arrows. }
\label{coupling}
\end{figure}%
This amounts to use the same arrows for both marginals. By construction, each marginal of the coupling has the distribution of the tasep. Particles at site $x$ of each marginal try to jump at the same time, but the jump occurs only if the destination site $x+1$ is empty in the corresponding marginal. 

Denote $\eta\le \eta'$ if $\eta(x)\le\eta'(x)$ for all $x\in\Z$. 

\begin{lemma}
  \label{attractivity1}
Attractivity. For all $t\ge 0$ we have 
\begin{align}
\label{attractivity}
  \eta\le\eta'\quad \hbox{implies}\quad \eta_{t}\le\eta'_{t} \qquad \hbox{a.s.}
\end{align}
Discrepancy conservation. If $\eta\le\eta'$, and the number of discrepancies is finite, then
\begin{align}
\label{discrepancy-conservation}
  \sum_x(\eta'(x)-\eta(x)) = \sum_x(\eta'_t(x)-\eta_t(x)).
\end{align}\end{lemma}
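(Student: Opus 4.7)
The plan is to reduce both claims to a finite, arrow-by-arrow bookkeeping inside the graphical construction. Fix $T>0$ and work inside one of the a.s.\ finite boxes $[x_i+1,x_{i+1}]$ provided by the existence of blocked columns: on such a box the coupled evolution on $[0,T]$ is determined by finitely many arrows, which I process in temporal order. It suffices to show that a single arrow $(x,t)\in\omega$ preserves (i) the coordinatewise order and (ii) the total number of discrepancies $\eta'-\eta$. The lemma then follows by iterating over the finitely many arrows in each box, noting that boxes evolve independently since no arrow crosses box boundaries in $[0,T]$, and finally extending to all $t\ge 0$ by concatenating $[0,T],[T,2T],\ldots$ via the almost sure Markov property \eqref{asmp}.

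For attractivity, assume $\eta\le\eta'$ just before an arrow $x\to x+1$ and split into four cases. If both marginals jump or neither does, the order is trivially preserved. If only $\eta$ jumps, then $\eta(x)=1$ and $\eta(x+1)=0$, while $\eta'$ is blocked; the block cannot come from $\eta'(x)=0$ (which would contradict $1=\eta(x)\le\eta'(x)$), so it comes from $\eta'(x+1)=1$, and after the arrow $\eta(x)=0\le\eta'(x)$ and $\eta(x+1)=1=\eta'(x+1)$. Symmetrically, if only $\eta'$ jumps, the hypothesis $\eta\le\eta'$ forces $\eta(x+1)=0$ (else $\eta(x+1)>\eta'(x+1)$) and either $\eta(x)=0$ or $\eta(x)=1$ with $x+1$ already $0$ in $\eta$, both of which leave the order intact at sites $x,x+1$ after the arrow. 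This proves \eqref{attractivity}.

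The same case analysis delivers discrepancy conservation. A discrepancy sits at $x$ iff $\eta(x)=0$ and $\eta'(x)=1$. When both or neither marginal jumps, the pair $(\eta(x),\eta(x+1),\eta'(x),\eta'(x+1))$ changes in a way that leaves the number of discrepancies in $\{x,x+1\}$ unchanged. When only $\eta'$ jumps, the pre-arrow configuration on $\{x,x+1\}$ must be $(0,0,1,0)$ and the post-arrow one is $(0,0,0,1)$: a discrepancy moves from $x$ to $x+1$. When only $\eta$ jumps, the configuration is $(1,0,1,1)\mapsto(0,1,1,1)$: a discrepancy moves from $x+1$ to $x$. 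In all cases the total count is invariant, giving \eqref{discrepancy-conservation} (both sides are infinite simultaneously if the initial count is infinite). The main obstacle is not mathematical but notational: keeping the four-case analysis clean. No new ideas are needed beyond the graphical construction; everything reduces to the observation that the joint rule at each arrow preserves the partial order on $\{0,1\}^2$ and permutes discrepancies among neighboring sites.
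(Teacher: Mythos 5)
Your proof is correct and follows essentially the same route as the paper's: the paper simply observes that it suffices to check that a single arrow preserves the coordinatewise order and the discrepancy count, which is exactly the arrow-by-arrow case analysis you carry out in detail (your only slip is listing, in the ``only $\eta'$ jumps'' case, the sub-alternative $\eta(x)=1$, $\eta(x+1)=0$, which cannot occur there since $\eta$ would then jump as well; this is harmless). Nothing further is needed.
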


\begin{proof} To show \eqref{attractivity} it is sufficient to check that if $\eta_{t-}\le \eta'_{t-}$ and $(t,x)\in\omega$, that is, there is an arrow from $x$ to $x+1$ at time $t$, then $\eta_{t}\le \eta'_{t}$ , that is, the domination still holds after the arrow. The same exploration shows that the number of discrepancies does not change after the arrow.
\end{proof}

\paragraph{First and second class particles} Fix $\eta\le \eta'$ and call
\begin{align}
  \sigma_t:= \eta_t[\eta,\omega],\qquad \xi_t := \eta_t[\eta',\omega]-\eta_t[\eta,\omega].
\end{align}
By definition $\sigma_t\in\{0,1\}^\Z$ and by attractivity, $\xi_t\in\{0,1\}^\Z$.  
We call \emph{first class} the $\sigma$ particles and \emph{second class} the $\xi$ particles.
\begin{figure}[h]
\centering
\includegraphics[
clip=true, 
scale=.7
]
{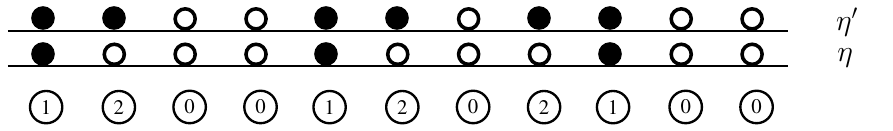} 
\caption{ The $(\sigma,\xi)$ configuration associated to $(\eta,\eta')$ of figure \ref{coupling}. $\sigma$ particles are labeled $1$, $\xi$ particles are labeled $2$ and holes are labeled $0$.}
\label{f-first-second}
\end{figure}%
The process $((\sigma_t,\xi_t):t\ge0)$ is Markov; it can be constructed directly as function of $\omega$ and the initial configurations $\sigma$ and $\xi$, as follows. At each site there is at most one particle, either $\xi$ or $\sigma$. Arrows involving $\xi$-$\xi$, $\sigma$-$\sigma$, $\xi$-0, $\sigma$-0 particles, use the same rules as the tasep, but arrows involving $\sigma$-$\xi$ particles follow the rules (a) if $\sigma \to \xi$ then  the particles interchange positions and (b) if $\xi\to\sigma$, then nothing happens. In other words, $\xi$ particles behave as particles when interacting with holes and as holes when interacting with $\sigma$ particles. 
\begin{figure}[h]
\centering
\includegraphics[
clip=true, 
scale=.7
]
{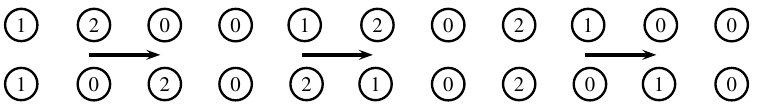} 
\caption{Another way of looking at the coupling. We see three possible jumps of first and second class particles associated to the configuration $\eta'$ and $\eta$ of figure \ref{coupling}. The upper line shows the configuration before the jumps and the lower line the one after the jumps.}
\label{first-second-jumps}
\end{figure}%

The vector $(\sigma_t,\xi_t)$ depends on the initial configuration $(\sigma,\xi)=(\eta,\eta'-\eta)$ and on~$\omega$. When this needs to be stressed we denote 
\begin{align}
  \label{sixiom}
(\sigma_t,\xi_t)=(\sigma_t,\xi_t)[(\sigma,\xi),\omega)]=(\sigma_t[(\sigma,\xi),\omega)],\xi_t[(\sigma,\xi),\omega)]),
\end{align}
either way.

\section{Law of large numbers}
\label{six}

\paragraph{Flux}
Let $(y_t:t\ge 0)$ be an arbitrary trajectory in $\R$ with $y(0)=0$. Define the \emph{flux} of particles along $y_t$ by 
\begin{align}
  &F_{y_t}(t)[\eta,\omega]
:= \sum_{i\le 0} \one\{X_t(i)[\eta,\omega]>y_t\} -\sum_{i>0}\one\{X_t(i)[\eta,\omega]\le y_t\}
\end{align}
\begin{figure}[h]
\centering
\includegraphics[
clip=true, 
scale=.7
]
{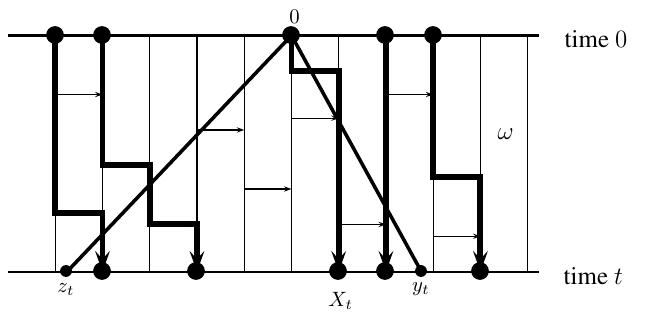} 
\caption{The flux along trajectory $y_t$ is $-1$ and the flux along trajectory $z_t$ is 3. }
\label{d-flux}
\end{figure}%
Consider the configuration $\tilde\eta$ defined from $\eta$ in \eqref{tilde-eta}, having  a particle at the origin. Recall $X_t$ is the position of the tagged particle of $\tilde\eta$ initially at the origin 
and observe that due to the exclusion interaction and the nearest neighbor jumps, the flux of $\tilde\eta$ particles along the tagged particle $X_t$ is null:
\begin{align}
  \label{ftxt}
F_{X_t}(t)[\tilde\eta,\omega]\equiv 0.
\end{align}
Hence we have the following alternative expression for the flux of $\tilde\eta$ particles. 
\begin{align}
  \label{fluxt}
F_{y_t}(t)[\tilde\eta,\omega] = \sum_x \tilde\eta_t(x)\,\big( \one\{y_t< x\le X_t\}-\one\{X_t< x\le y_t\}\big); 
\end{align}
only one of the indicator functions is no null in each term of \eqref{flux}. And, since $\eta$ and $\tilde\eta$ have at most one discrepancy which is conserved by \eqref{discrepancy-conservation}, 
\begin{align}
  \label{flux}
F_{y_t}(t)[\eta,\omega] = \sum_x \eta_t(x)\,\big( \one\{y_t< x\le X_t\}-\one\{X_t< x\le y_t\}\big)\,+\, O(1),
\end{align}
where $O(1)$ is some function of $U$, $\omega$ and $t$ satisfying that  $|O(1)|\le$ Constant. The function may change from line to line, but in any case $O(1)/t$ goes to zero almost surely when $t\to\infty$.

\begin{proposition} Let $a\in \R$. Then, 
\begin{align}
\label{fl1}
\limt \frac{F_{at}(t)[\eta^\rho,\omega]}{ t} &= \rho[(1-\rho)-a],\qquad  \hbox{ a.s.}
\end{align}
\end{proposition}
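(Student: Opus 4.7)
The plan is to combine the flux identity \eqref{flux}, the LLN for the tagged particle (Proposition~\ref{lln-tagged}), and the Bernoulli-product stationarity of $\eta^\rho_t$ (Lemma~\ref{nu-rho-invariant}) via a sandwich argument.

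First I apply \eqref{flux} with $y_t=at$: up to an $O(1)$ correction,
\[
F_{at}(t)[\eta^\rho,\omega]=\sum_x \eta^\rho_t(x)\bigl(\one\{at<x\le X_t\}-\one\{X_t<x\le at\}\bigr),
\]
where $X_t$ is the position at time $t$ of the tagged particle initially at $0$ in $\tilde\eta^\rho$, which by Proposition~\ref{lln-tagged} satisfies $X_t/t\to 1-\rho$ almost surely. Hence for $a<1-\rho$ only the first indicator survives for large $t$, and $F_{at}(t)$ equals (up to $O(1)$) the number of $\eta^\rho_t$-particles in $(at,X_t]$; for $a>1-\rho$ it is minus the number in $(X_t,at]$.

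Next I would establish a deterministic-endpoints density law of large numbers: for every pair of reals $c<d$,
\[
\frac{1}{t}\sum_{x:\,ct<x\le dt}\eta^\rho_t(x)\;\longrightarrow\;\rho(d-c)\qquad\hbox{a.s.}
\]
By Lemma~\ref{nu-rho-invariant}, at each fixed $t$ the coordinates $(\eta^\rho_t(x))_{x\in\Z}$ are i.i.d.~Bernoulli$(\rho)$; Hoeffding's inequality together with Borel--Cantelli along integer times yields a.s.~convergence, and monotonicity of the partial sums in the endpoints upgrades this to uniform convergence for $(c,d)$ in a compact set.

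Finally I sandwich, assuming $a<1-\rho$ (the other case is symmetric). For $0<\delta<1-\rho-a$, a.s.~for all large $t$ one has $X_t\in((1-\rho-\delta)t,(1-\rho+\delta)t)$, so
\[
\sum_{at<x\le(1-\rho-\delta)t}\eta^\rho_t(x)\;\le\;F_{at}(t)+O(1)\;\le\;\sum_{at<x\le(1-\rho+\delta)t}\eta^\rho_t(x).
\]
Dividing by $t$ and applying the density LLN to both sides, then letting $\delta\downarrow 0$, gives $F_{at}(t)/t\to\rho[(1-\rho)-a]$ a.s. The main obstacle is the intermediate density LLN: we need almost-sure (not merely in-probability) convergence with endpoints scaling linearly in $t$, and this is where the i.i.d.~spatial structure preserved at each time by the dynamics, via exponential concentration, is essential.
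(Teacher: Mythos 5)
Your proposal is correct and follows essentially the same route as the paper: both apply the flux identity \eqref{flux} with $y_t=at$, invoke the stationarity of $\eta^\rho_t$ (Lemma \ref{nu-rho-invariant}) to get a density law of large numbers over deterministic intervals, and use Proposition \ref{lln-tagged} to control the random endpoint $X_t$. The only difference is cosmetic: the paper splits the sum at the deterministic point $(1-\rho)t$ and bounds the leftover term by $|X_t-(1-\rho)t|/t\to 0$, whereas you sandwich between the intervals $(at,(1-\rho\pm\delta)t]$ and let $\delta\downarrow 0$; your explicit Hoeffding/Borel--Cantelli justification of the density LLN is a welcome elaboration of a step the paper leaves implicit.
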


\begin{proof} Using \eqref{flux} we can write
\begin{align}
  F_{at}(t)[\eta^\rho,\omega] & = \sum_x \eta^\rho_t(x)\,\big( \one\{at< x\le (1-\rho)t\}-\one\{ (1-\rho)t< x\le at\}\big)\nonumber\\
&\quad + \sum_x \eta^\rho_t(x)\,\big(\one\{ (1-\rho)t< x\le X_t\}-\one\{X_t< x\le (1-\rho)t\}\big)+O(1). \nonumber
\end{align}
Dividing by $t$ and taking $t\to\infty$, the first term converges a.s.\/ to $\rho[(1-\rho)-a]$ because  $\eta^\rho_t$ is a sequence of iid Bernoulli$(\rho)$ random variables by Lemma \ref{nu-rho-invariant}. 
The absolute value of the second term is bounded by $|X_t-(1-\rho)t|/t$ which goes to zero a.s.\/ by Proposition~\ref{lln-tagged}. 
\end{proof}

\paragraph{Tagged second class particle} 
Take $0\le\lambda<\rho\le1$ and using always the same $U$ and $\omega$ define the two-class process
\begin{align}
  \label{rlt}
(\sigma_t,\xi_t):= (\eta^{\lambda}_t,\eta^{\rho}_t-\eta^\lambda_t).
\end{align}
The marginal laws of $\sigma_t$ and $\sigma_t+\xi_t$ are stationary but the process $(\sigma_t,\xi_t)$ is not stationary. 
Take off a particle of $\eta$ at the origin defining $\utilde\eta$ as the configuration
\begin{align}
  \label{utilde}
  \utilde\eta(x) := \begin{cases}
  0 &\hbox{ if }x=0\\
\eta(x)&\hbox{ otherwise}.
\end{cases}
\end{align}
and recall $\tilde\eta$ defined in \eqref{tilde-eta} as the configuration $\eta$ with a particle at the origin. Now define
\begin{align}
\label{tilde-sigma-xi}
  (\utilde\sigma_t,\tilde\xi_t):= (\utilde\eta^{\lambda}_t,\tilde\eta^{\rho}_t-\utilde\eta^\lambda_t).
\end{align}
The initial configuration for this process is identical to $(\sigma,\xi)$ out of the origin while at the origin there is a second class particle: $\sigma(0)=0$ and $\xi(0)=1$.

\begin{proposition} Take $\lambda<\rho$ and let $Y^{\lambda,\rho}_t$ be the position of the tagged $\xi$ particle for the process \eqref{tilde-sigma-xi}, initially located at the origin, $Y^{\lambda,\rho}_0=0$. Then, 
\begin{align}
\label{llnxt}
  \lim_{t\to\infty} \frac{Y^{\lambda,\rho}_t}{t} = 1-\lambda-\rho,\qquad \hbox{a.s.}
\end{align}
\end{proposition}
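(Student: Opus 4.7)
The plan is to use the order preservation of the second class particles of the process~\eqref{tilde-sigma-xi} to produce an exact identity between the fluxes of the two stationary marginals evaluated along the trajectory $Y^{\lambda,\rho}_t$, and then pin down the asymptotic speed by a monotonicity sandwich together with the flux law of large numbers~\eqref{fl1}.

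First I check that in the two-class dynamics of Section~\ref{coupling1} the $\tilde\xi$-particles never overtake one another: an arrow between two adjacent $\tilde\xi$-particles does nothing by the tasep rule, and a $\utilde\sigma$-$\tilde\xi$ interchange shifts a single $\tilde\xi$-particle by one site without altering its position relative to the other $\tilde\xi$'s. Hence the tagged particle $Y^{\lambda,\rho}_t$, initially at the origin, retains its rank among the $\tilde\xi$-particles for every $t\ge 0$. Labelling the $\tilde\xi$-particles in the manner of the definition of the flux in Section~\ref{six} (with the one initially at the origin carrying label $0$), rank preservation forces both sums in the resulting expression for the flux of $\tilde\xi$ to vanish when $y_t=Y^{\lambda,\rho}_t$. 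Applied in parallel to $\tilde\eta^\rho=\utilde\sigma+\tilde\xi$ and $\utilde\eta^\lambda=\utilde\sigma$, this yields the exact identity
\[
F_{Y^{\lambda,\rho}_t}(t)[\tilde\eta^\rho,\omega]=F_{Y^{\lambda,\rho}_t}(t)[\utilde\eta^\lambda,\omega];
\]
the single-site discrepancies $\tilde\eta^\rho-\eta^\rho$ and $\utilde\eta^\lambda-\eta^\lambda$ are conserved by Lemma~\ref{attractivity1} and contribute only $O(1)$ to any flux, so
\[
F_{Y^{\lambda,\rho}_t}(t)[\eta^\rho,\omega]-F_{Y^{\lambda,\rho}_t}(t)[\eta^\lambda,\omega]=O(1).
\]

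Writing $N_{\tilde\xi}(a,b,t):=\sum_{a<x\le b}\tilde\xi_t(x)$, the same bookkeeping for an arbitrary trajectory $y_t$ shows that $F_{y_t}(t)[\eta^\rho]-F_{y_t}(t)[\eta^\lambda]$ equals $N_{\tilde\xi}(y_t,\infty,t)-N_{\tilde\xi}(0,\infty,0)+O(1)$, which is a non-increasing function of $y_t$ up to a bounded remainder. Applying~\eqref{fl1} separately to each stationary marginal gives, for every $a\in\R$,
\[
\lim_{t\to\infty}\frac{F_{at}(t)[\eta^\rho]-F_{at}(t)[\eta^\lambda]}{t}=\rho(1-\rho-a)-\lambda(1-\lambda-a)=(\rho-\lambda)(1-\lambda-\rho-a)\qquad\text{a.s.},
\]
a strictly decreasing affine function of $a$ that vanishes exactly at $a=1-\lambda-\rho$.

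To conclude, I sandwich $Y^{\lambda,\rho}_t$ via this monotonicity. Fix $a>1-\lambda-\rho$: whenever $Y^{\lambda,\rho}_t\ge at$, monotonicity gives $F_{at}(t)[\eta^\rho]-F_{at}(t)[\eta^\lambda]\ge F_{Y^{\lambda,\rho}_t}(t)[\eta^\rho]-F_{Y^{\lambda,\rho}_t}(t)[\eta^\lambda]-O(1)=O(1)$; since the left side divided by $t$ converges a.s.\ to the strictly negative number $(\rho-\lambda)(1-\lambda-\rho-a)$, this can happen only for finitely many $t$, and so $\limsup_t Y^{\lambda,\rho}_t/t\le a$ a.s. Letting $a\downarrow 1-\lambda-\rho$ and then running the symmetric argument with $a<1-\lambda-\rho$ yields the matching lower bound, giving~\eqref{llnxt}. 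The one genuine technical point is the first step, where $N_{\tilde\xi}(0,\infty,0)$ is a.s.\ infinite and rank preservation must be turned into the flux identity through the position-labelled sums of the flux definition rather than by a naive subtraction of infinities.
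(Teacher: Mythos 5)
Your proposal is correct and follows essentially the same route as the paper: you express the flux of $\tilde\xi$ particles as the difference $F_{y_t}(t)[\eta^\rho]-F_{y_t}(t)[\eta^\lambda]+O(1)$, observe it vanishes along $Y^{\lambda,\rho}_t$ (the paper says ``by exclusion,'' you justify it via rank preservation), and combine monotonicity in $a$ with the flux law of large numbers \eqref{fl1} to locate the zero at $a=1-\lambda-\rho$. The extra care you take with the infinite sums and the sandwich argument only makes explicit what the paper leaves implicit.
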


\begin{proof} Denote $G_{y_t}(t)[(\utilde\sigma,\tilde\xi),\omega]$ the flux of $\tilde\xi$ particles along a trajectory $y_t$ for the process $(\utilde\sigma_t,\tilde\xi_t)$. This flux is the difference of $\tilde\eta^\rho$ particle flux and the $\utilde\eta^\lambda$ particle flux:
\begin{align}
G_{y_t}(t)[(\utilde\sigma,\tilde\xi),\omega]&=F_{y_t}(t)[\utilde\eta^{\rho},\omega]-F_{y_t}(t)[\tilde\eta^{\lambda},\omega]\label{a877}\\
&=F_{y_t}(t)[\eta^{\rho},\omega]-F_{y_t}(t)[\eta^{\lambda},\omega]+O(1),\label{319}
\end{align}
where the error $O(1)$ comes from \eqref{flux}. Taking $y_t=at$ for some real number $a$, by the law of large numbers~\eqref{fl1},
\begin{align}
  \limt \frac{G_{at}(t)[(\utilde\sigma,\tilde\xi),\omega]}{ t} &=[\rho(1-\rho)-\lambda(1-\lambda)]-a(\rho-\lambda),\quad \hbox{a.s.}
\end{align}
The limit is negative for $a>1-\lambda-\rho$ and positive for $a<1-\lambda-\rho$.
On the other hand, $G_{at}(t)$ is non increasing in $a$
and, by exclusion, the flux of $\tilde\xi$ particles along $Y^{\lambda,\rho}_t$ is null: $G_{Y^{\lambda,\rho}_t}(t)\equiv 0$. This implies \eqref{llnxt}. 
\end{proof}



\paragraph{Isolated second class particle}
Take $\alpha\in(0,1)$. To create a second class particle for the configuration $\eta^\alpha$ we consider the coupling
\begin{align}
\label{tilde-alpha}
  (\utilde\eta^\alpha_t,\tilde\eta^\alpha_t-\utilde\eta^\alpha_t)
\end{align}
and call
\begin{align}
\label{tilde-alpha1}
   R^\alpha_t:= \{x: \tilde\eta^\alpha_t(x)\neq\utilde\eta^\alpha_t(x)\},
\end{align}
the position at time $t$ of the second class particle in the coupling \eqref{tilde-alpha}. 
\begin{proposition}
\label{isolated}
  We have
\begin{align}
\label{llnrt}
  \lim_{t\to\infty} \frac{R^\alpha_t}{t} = 1-2\alpha,\qquad \hbox{a.s.}
\end{align}
\end{proposition}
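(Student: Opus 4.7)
The plan is to sandwich $R^\alpha_t$ between two tagged second-class particles whose asymptotic velocities are already given by the previous proposition, and then close the sandwich by letting the two parameters converge to $\alpha$. Fix $\lambda,\rho$ with $0\le \lambda<\alpha<\rho\le 1$, and using the same $\omega$ and the same $U$ of \eqref{uuu} construct the coupling of the four processes starting from the ordered configurations $\utilde\eta^\lambda\le \utilde\eta^\alpha\le \tilde\eta^\alpha\le \tilde\eta^\rho$; attractivity (Lemma \ref{attractivity1}) preserves this order at every $t\ge 0$. Three two-class systems sit inside this coupling: the pair $(\utilde\eta^\lambda_t,\tilde\eta^\alpha_t)$ carries the tagged second-class particle at the origin $Y^{\lambda,\alpha}_t$; the pair $(\utilde\eta^\alpha_t,\tilde\eta^\alpha_t)$ has a unique discrepancy, namely $R^\alpha_t$; and the pair $(\utilde\eta^\alpha_t,\tilde\eta^\rho_t)$ carries $Y^{\alpha,\rho}_t$.

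The heart of the argument is the pathwise sandwich
\[
 Y^{\alpha,\rho}_t \;\le\; R^\alpha_t \;\le\; Y^{\lambda,\alpha}_t, \qquad t\ge 0,
\]
which I would establish by reading the coupling of $\utilde\eta^\lambda, \utilde\eta^\alpha, \tilde\eta^\alpha$ as a three-class tasep with priorities $1,2,3$ equal to $\utilde\eta^\lambda$, $\utilde\eta^\alpha-\utilde\eta^\lambda$ and the singleton $\tilde\eta^\alpha-\utilde\eta^\alpha$ at the origin. Collapsing priorities $\{1,2\}$ versus $\{3\}$ recovers the two-class system whose unique second-class particle is $R^\alpha$, while collapsing $\{1\}$ versus $\{2,3\}$ recovers $(\utilde\eta^\lambda_t,\tilde\eta^\alpha_t)$, in which $Y^{\lambda,\alpha}$ is the tagged second-class particle at the origin. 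The \emph{set} of second-class positions evolves identically under both readings; however, inside the three-class system a priority-$2$ arrow onto a priority-$3$ site swaps the two particles, whereas in the two-class reduction the corresponding $\xi\to\xi$ arrow does nothing. Each such $\{2,3\}$-swap decreases by one the rank of $R^\alpha$ in the ordered list of second-class positions, while $Y^{\lambda,\alpha}$ preserves its initial rank forever (the two-class rule never reorders second-class particles among themselves). Since the rank-$k$ position is non-decreasing in $k$, this yields $R^\alpha_t\le Y^{\lambda,\alpha}_t$. The mirror three-class reading of $\utilde\eta^\alpha, \tilde\eta^\alpha, \tilde\eta^\rho$, in which $R^\alpha$ now plays the priority-$2$ role and every swap with a priority-$3$ particle strictly \emph{increases} its rank, gives the other bound $R^\alpha_t\ge Y^{\alpha,\rho}_t$. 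The main technical step is a careful bookkeeping of this rank-monotonicity, which I would write out by induction on the Poisson marks that act on an adjacent priority-$2$/priority-$3$ pair.

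Dividing the sandwich by $t$ and applying the previous proposition yields, almost surely,
\[
 1-\alpha-\rho \;\le\; \liminf_{t\to\infty}\frac{R^\alpha_t}{t} \;\le\; \limsup_{t\to\infty}\frac{R^\alpha_t}{t}\;\le\; 1-\lambda-\alpha.
\]
Choosing countable sequences $\lambda_n\uparrow\alpha$ and $\rho_n\downarrow\alpha$ closes both sides to $1-2\alpha$, giving \eqref{llnrt}.
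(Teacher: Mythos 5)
Your proof is correct and follows essentially the same route as the paper: the same pathwise sandwich $Y^{\alpha,\rho}_t\le R^\alpha_t\le Y^{\lambda,\alpha}_t$ built from the couplings \eqref{tilde-alpha}, \eqref{tilde-sigma-xi-1} and \eqref{tilde-sigma-xi-2}, closed by the law of large numbers \eqref{llnxt} and $\lambda\uparrow\alpha$, $\rho\downarrow\alpha$. Your three-class rank bookkeeping is just a more explicit justification of the two inequalities \eqref{r>y}--\eqref{r<y} that the paper asserts via the blocking heuristic and the identities \eqref{er233}--\eqref{b300}.
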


\begin{proof} Take $\alpha<\rho$ and consider the coupling
\begin{align}
\label{tilde-sigma-xi-1}
 (\utilde\eta^{\alpha}_t,\tilde\eta^{\rho}_t-\utilde\eta^\alpha_t)
\end{align}
and, as before, denote $Y^{\alpha,\rho}_t$ the position of the tagged second class particle initially at the origin for this process. Recalling that we are using the same $U$ and $\omega$ in the couplings \eqref{tilde-alpha} and \eqref{tilde-sigma-xi-1} we see that both $R^\alpha_t$ and $Y^{\alpha,\rho}_t$ see the same first class particles $\utilde\eta^\alpha_t$ but while $R^\alpha_t$ sees no other particle, $Y^{\alpha,\rho}_t$ is blocked by the second class particles $(\tilde\eta^{\rho}_t-\utilde\eta^\alpha_t)$ to its right. For this reason,
\begin{align}
\label{r>y}
    R^\alpha_t\ge Y^{\alpha,\rho}_t, \qquad \hbox{ if } \alpha<\rho.
\end{align}
On the other hand, take $\lambda<\alpha$ and consider the coupling
\begin{align}
\label{tilde-sigma-xi-2}
 (\utilde\eta^{\lambda}_t,\tilde\eta^{\alpha}_t-\utilde\eta^\lambda_t).
\end{align}
The first class particles for $Y^{\lambda,\alpha}_t$ are $\eta^{\lambda,\alpha}_t\le \eta^\alpha_t$, the first class particles for $R^\alpha_t$. See \eqref{er233} and \eqref{b300} below for more details. Hence
\begin{align}
  \label{r<y}
R^\alpha_t\le Y^{\lambda,\alpha}_t,\qquad \hbox{ if } \lambda < \alpha.
\end{align}
Use the law of large numbers \eqref{llnxt} to conclude. 
\end{proof}

\section{Proof of hydrodynamics: increasing shock}
\label{seven}
In this section we prove Theorem \ref{t4.2} in the shock case $\lambda<\rho$. Recall that in this case the solution $u(r,t) = u^{\lambda,\rho}(r-(1-\lambda-\rho)t)$ is a translation of the initial condition. 

Let  $\Gamma_z:\{0,1\}^\Z\to\{0,1\}^\Z$ be the \emph{cut operator} defined by 
\begin{align}
\label{Gamma}
  \Gamma_z\eta(x) := \eta(x)\one\{x\ge z\}.
\end{align}
This operator, when applied to the configuration $\eta$ cuts the $\eta$-particles to the left of $z$. 
The operator $\Gamma_0$, when applied to the second class particles $\xi$ commutes with the dynamics in the following sense. 
If $\xi(0)=1$ and $Y_t$ is the position of the $\xi$ particle initially at the origin, then
\begin{align}
  \label{gamma}
(\sigma_t[(\sigma,\xi),\omega],\Gamma_{Y_t}\xi_t[(\sigma,\xi),\omega])= (\sigma_t[(\sigma,\Gamma_0\xi),\omega],\xi_t[(\sigma,\Gamma_0\xi),\omega]).
\end{align}
That is, to cut the initial $\xi$ configuration to the left of the origin and evolve until time $t$ is the same as to cut the $\xi_t$ configuration to the left of $Y_t$. The reason is that the initial $\xi$ particles to the left of $Y_0$ are not felt neither by the $\sigma$ particles nor by the $\xi$ particles at $Y_0$ and to the right of $Y_0$, so it is the same to cut them at time 0 than to cut them at time~$t$. Since those particles occupy sites to the left of $Y_t$ at that time, we get \eqref{gamma}.

Let $(\sigma,\xi)$ be a two-class configuration and let
\begin{align}
  \eta:= \sigma + \Gamma_0\xi.
\end{align}
Add a second class particle with respect to $\eta_t$ at the origin at time zero; call $R_t$ its position at time $t$. Add a $\xi$ particle at the origin at time zero; call $Y_t$ its position at time $t$. Then, using \eqref{gamma},
\begin{align}
    R_t&=Y_t \label{er233}\\
  (\utilde\eta_t,R_t)&= (\utilde\sigma_t+ \Gamma_{Y_t}\tilde\xi_t,Y_t). \label{er234}
\end{align}
Recall $\eta^\rho$ and $\eta^{\lambda,\rho}$ are already defined as functions of $U$ and that their different tilded versions are also defined in \eqref{tilde-eta} and \eqref{utilde}. Set $\sigma=\eta^\lambda$ and $\xi=\eta^\rho-\eta^\lambda$. From those definitions we have
\begin{align}
  (\utilde\sigma,\tilde\xi)&= (\utilde\eta^\lambda, \tilde\eta^\rho-\utilde\eta^\lambda)\nn\\
  \utilde\eta^{\lambda,\rho}&= \utilde\sigma+ \Gamma_0\tilde\xi\nn
\end{align}
Let $R^{\lambda,\rho}_t$ be a second class particle with respect to $\utilde\eta^{\lambda,\rho}_t$ and $Y^{\lambda,\rho}_t$ be a $\tilde\xi$ tagged particle for $(\utilde\sigma_t,\tilde\xi_t)$ with $R_0^{\lambda,\rho}=Y_0^{\lambda,\rho}=0$. Then,
\begin{align}
  \label{b300}
  R^{\lambda,\rho}_t&= Y^{\lambda,\rho}_t\nn\\
  (\utilde\eta^{\lambda,\rho}_t,R^{\lambda,\rho}_t)&= (\utilde\sigma_t+ \Gamma_{Y^{\lambda,\rho}_t}\tilde\xi_t,Y^{\lambda,\rho}_t),
\end{align}
for all $t\ge0$ by \eqref{er233}-\eqref{er234}. Roughly speaking, to obtain the system with shock initial condition $\eta^{\lambda,\rho}_t$ and a second class particle $R^{\lambda,\rho}_t$ one can take the system of two classes $(\sigma_t,\xi_t)$ with the right marginals, cut the second class particles to the left of the tagged second class particle $Y^{\lambda,\rho}_t$ and forget the classes for the remaining particles.  
Notice that 
\begin{align}
  \eta^{\lambda,\rho}_t(x) = 
  \begin{cases}
\label{52a}
    \utilde\eta^{\lambda,\rho}_t(x) &\hbox{if } x\neq R^{\lambda,\rho}_t\\[2mm]
\eta^{\lambda,\rho}_t(0)&\hbox{if } x= R^{\lambda,\rho}_t
  \end{cases}
\end{align}


\paragraph{Proof of local equilibrium \eqref{local-equilibrium1} for $\lambda<\rho$}
 In this case \eqref{local-equilibrium1} reduces to 
 \begin{align}
\label{local-equilibrium-t}
   \limt E f_A(  \tau_{rt}\eta^{\lambda,\rho}_{t}) =
   \begin{cases}
     \rho^{|A|} &\hbox{if } r>1-\rho-\lambda\\
     \lambda^{|A|} &\hbox{if } r<1-\rho-\lambda
   \end{cases}  
 \end{align}
Take first $r>(1-\lambda-\rho)$ and denote $Y_t=Y^{\lambda,\rho}_t$ the position of the tagged $\xi$ particle. By \eqref{b300} and \eqref{52a} we get
\begin{align}
 E f_A(  \tau_{rt}\utilde\eta^{\lambda,\rho}_{t})& = E f_A( \tau_{rt}(\utilde\sigma_{t}+\Gamma_{Y_t}\tilde\xi_{t}))\label{s68}\\
&= E\big[f_A(\tau_{rt}(\sigma_{t}+\xi_{t}))\, \one\{Y_t<rt+\min A\}\big]\label{s683}\\
&\qquad +\, E\big[f_A(\tau_{rt}(\utilde\sigma_{t}+\Gamma_{Y_t}\tilde\xi_{t}))\, \one\{Y_t\ge rt +\min A\}\big]\nonumber\\
&
                                                                                      \mathop{\to}_{t\to\infty}\;\rho^{|A|},\label{s71}
\end{align}
where in \eqref{s68} we used \eqref{b300} to get an expression in terms of $(\sigma,\xi)$ and in \eqref{s683} we used the definition of the cut operator $\Gamma$ to erase it and \eqref{52a} to erase the tildes. Since  $Y_t/t\to 1-\lambda-\rho$ a.s., the indicator functions converge a.s.\/ to 1 and zero respectively. Since  $|f_A|\le 1$, the second summand goes to zero and since $\sigma_t+\xi_t=\eta^\rho$ whose law is shift invariant, the first summand converges to $\rho^{|A|}$; this justifies \eqref{s71} and concludes the proof of  \eqref{local-equilibrium-t} when $r>(1-\lambda-\rho)$. 
The same argument shows \eqref{local-equilibrium-t} when $r<(1-\lambda-\rho)$.

\paragraph{\bf Proof of convergence of the density fields}

We use the same argument and notation as in the previous proof. Fix $1-\lambda-\rho<a<b$ and write 
\begin{align}
 \frac1t\sum_{at\le x\le bt}\eta^{\lambda,\rho}_t(x) 
& = \frac1t\sum_{at\le x\le bt} (\sigma_t(x)+\Gamma_{Y_t}\xi_t(x))\nonumber\\
& = \frac1t\sum_{at\le x\le bt} (\sigma_t(x)+\xi_t(x))\,\one\{Y_t<at\}  + (b-a)O(1)\,\one\{Y_t\ge at\}\nonumber\\
& = \frac1t\sum_{at\le x\le bt} \eta^\rho_t(x)\,(1- \one\{Y_t\ge at\})  + (b-a)O(1)\,\one\{Y_t\ge at\}\label{**}\\
& = \frac1t\sum_{at\le x\le bt} \eta^\rho_t(x)  + 2\,(b-a)O(1)\,\one\{Y_t\ge at\}\nonumber\\
& \mathop{\to}_{t\to\infty} \rho (b-a)\label{***}
\end{align}
where \eqref{**} follows from \eqref{b300} and the limit \eqref{***} follows from the law of large numbers for  $\eta^\rho_t\sim\eta^\rho$ and the law of large numbers for the tagged second class particle $Y_t/t\to 1-\rho-\lambda$. 
The same argument applied to $c<d<1-\lambda-\rho$ shows 
\begin{align}
\frac1t\sum_{ct\le x\le dt}\eta^{\lambda,\rho}_t(x) 
& = \frac1t\sum_{ct\le x\le dt} (\sigma_t(x)+\Gamma_{Y_t}\xi_t(x)) \nonumber\\
& = \frac1t\sum_{ct\le x\le dt} \eta^\lambda_t(x) + (d-c)O(1)\,\one\{Y_t\le dt\}\nonumber\\
&
\mathop{\to}_{t\to\infty} \lambda (d-c)
\end{align}
using the law of large numbers for $\eta^\lambda_t$. 
For $d<1-\lambda-\rho<a$ we have 
\begin{align}
0\le \frac1t\sum_{dt\le x\le at}\eta^{\lambda,\rho}_t(x) \le  a-d.\nn
\end{align}
Taking $a,d\to 0$ we conclude that for $c<1-\lambda-\rho<b$ we have 
\begin{align}
  \frac1t\sum_{ct\le x\le dt}\eta^{\lambda,\rho}_t(x) \mathop{\to}_{t\to\infty} \lambda (1-\lambda-\rho-c) +\rho (b-1-\lambda-\rho). 
\end{align}
which is \eqref{aib1} in this case.

\section{Proof of hydrodynamics: rarefaction fan}
\label{eight}
Here we consider $\lambda>\rho$, when the  solution is the rarefaction fan \eqref{rarefaction-solution}.
An essential component of this proof is the law of large numbers for a second class particle Proposition \ref{isolated}.
We first prove a crucial lemma. Recall that the processes $\eta^\rho_t$ and $\eta^{\lambda,\rho}_t$  defined in \eqref{etalambda} and \eqref{etarl1} and $R^\alpha_t$ defined in \eqref{tilde-alpha1} are constructed with the same $U$ and $\omega$ for all $\lambda,\rho,\alpha$.
\begin{lemma}
\label{dom1}
Take $\lambda>\rho$ and 
for each $\alpha\in[0,1]$ let $R^\alpha_t$ be a second class particle initially at the origin for the process $\eta^\alpha_t$ as defined in \eqref{tilde-alpha1}.  Then
\begin{align}
\label{a382c}
  \eta^{\lambda,\rho}_t(x) =
  \begin{cases}
   \eta^\rho_t(x) &\hbox{if } x>R^\rho_t\\
 \eta^\lambda_t(x) &\hbox{if } x<R^\lambda_t.
  \end{cases}
\end{align}
Furthermore, for $\lambda\ge \alpha\ge \rho$ we have
\begin{align}
 \eta^{\lambda,\rho}_t(x) \le \eta^\alpha_t(x),\quad\hbox{for } x> R^\alpha_t, \label{a382b}\\
 \eta^\alpha_t(x) \le  \eta^{\lambda,\rho}_t(x),\quad\hbox{for } x< R^\alpha_t.\label{a382a}
\end{align}
\end{lemma}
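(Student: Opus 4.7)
My plan is to establish Lemma~\ref{dom1} by coupling all the relevant processes through the common Poisson realization $\omega$ and uniform variables $U$, relying on attractivity (Lemma~\ref{attractivity1}) and on the cut-operator identity~\eqref{gamma}. At $t=0$ all four claims are immediate: $R^\alpha_0=0$, and from \eqref{etalambda}--\eqref{etarl1} one has $\eta^{\lambda,\rho}=\eta^\lambda\ge \eta^\alpha$ on $\{x\le 0\}$ and $\eta^{\lambda,\rho}=\eta^\rho\le \eta^\alpha$ on $\{x>0\}$ for any $\rho\le \alpha\le \lambda$.

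The heart of the proof is the first identity of \eqref{a382c}, $\eta^{\lambda,\rho}_t(x)=\eta^\rho_t(x)$ for $x>R^\rho_t$; the second is obtained symmetrically. I would form the two-class coupling $(\sigma_t,\xi_t)$ with $\sigma=\utilde\eta^\rho$ and $\xi=\tilde\eta^{\lambda,\rho}-\utilde\eta^\rho$. A direct check gives $\sigma+\xi=\tilde\eta^{\lambda,\rho}$, $\xi\in\{0,1\}^\Z$ with $\xi(0)=1$, and $\xi(x)=0$ for $x>0$. Now apply \eqref{gamma} with cut at $0$: since $\Gamma_0\xi$ is the single Dirac mass at $0$, the right-hand side of \eqref{gamma} is precisely the single-second-class-particle coupling \eqref{tilde-alpha} defining $R^\rho_t$, so the tagged $\xi$-particle initially at $0$ satisfies $Y_t=R^\rho_t$ and $\Gamma_{Y_t}\xi_t$ reduces to that sole particle. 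In particular $\xi_t(x)=0$ for $x>R^\rho_t$, giving $\tilde\eta^{\lambda,\rho}_t(x)=\utilde\eta^\rho_t(x)$ on that set. A short case split on $U(0)$ combined with the standard monotonicity that a denser first-class background pushes an isolated second-class particle to the left then transfers the identity to $\eta^{\lambda,\rho}_t(x)=\eta^\rho_t(x)$ for $x>R^\rho_t$.

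For the inequalities \eqref{a382b} and \eqref{a382a} I would interpolate between step configurations. Applying the identity just established to the pair $(\lambda,\alpha)$ yields $\eta^{\lambda,\alpha}_t(x)=\eta^\alpha_t(x)$ for $x>R^\alpha_t$, and basic attractivity gives $\eta^{\lambda,\rho}_t\le \eta^{\lambda,\alpha}_t$ because $\eta^{\lambda,\rho}\le \eta^{\lambda,\alpha}$ pointwise (they agree on $\{x\le 0\}$ and $\eta^\rho\le \eta^\alpha$ on $\{x>0\}$), producing \eqref{a382b}. The inequality \eqref{a382a} is obtained analogously from the twin identity applied to $\eta^{\alpha,\rho}$. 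The main obstacle throughout is the delicate bookkeeping between $\eta$, $\tilde\eta$ and $\utilde\eta$ at the origin: each of those pairs differs by at most one particle there, and the corresponding isolated second-class trajectory must be shown to stay on the correct side of $R^\alpha_t$, which I would verify by a one-step argument on the graphical construction.
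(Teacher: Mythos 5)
Your proposal is correct and follows essentially the same route as the paper: the two-class coupling $(\eta^\rho,\eta^{\lambda,\rho}-\eta^\rho)$ (resp.\ $(\eta^{\lambda,\rho},\eta^\lambda-\eta^{\lambda,\rho})$) with no $\xi$ particles initially to the right (resp.\ left) of the origin, identification of the extreme $\xi$ particle with the isolated second class particle $R^\rho_t$ (resp.\ $R^\lambda_t$), and then attractivity combined with the identity \eqref{a382c} applied to the intermediate step profiles $\eta^{\lambda,\alpha}$, $\eta^{\alpha,\rho}$. Your explicit appeal to the cut identity \eqref{gamma} is just a formalization of the paper's verbal claim that the tagged $\xi$ particle does not feel the $\xi$ particles to its left, and your flagged bookkeeping at the origin is handled equally informally in the paper (``force $\xi_0(0)=1$'').
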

\begin{figure}[h]
\centering
\includegraphics[
clip=true, 
scale=.8
]
{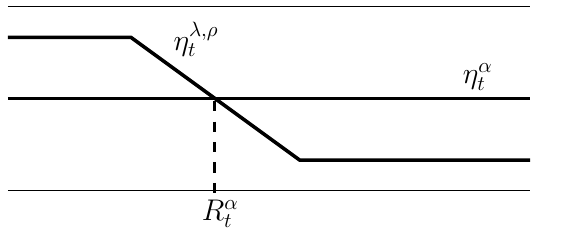} 
\caption{Macroscopic schema of \eqref{a382b} and \eqref{a382a}. The configuration $\eta^{\lambda,\rho}_t$ dominates $\eta^\alpha_t$ to the left of $R^\alpha_t$ and the opposite happens to its right.}
\label{fig8-2}
\end{figure}%
\begin{proof} Consider the process with only one second class particle and constant density $\rho$ given by
\begin{align}
  (\utilde\eta^\rho_t, \tilde\eta^{\rho}_t - \utilde\eta^\rho_t)
\end{align}
and let $R^\rho_t$ be the second class particle for this coupling. On the other hand, define
\[
(\sigma_t,\xi_t) := (\eta^\rho_t, \eta^{\lambda,\rho}_t - \eta^\rho_t),
\]
where $\sigma_t$ are first class particles and $\xi_t$ are second class particles. 

The first identity in \eqref{a382c} is equivalent to 
\begin{align}
  \xi_t(x)=0,\quad\hbox{ for } x>R^\rho_t.
\end{align}
This clearly holds at time $0$ because $R^\rho_0=0$ and $\xi(x) =\eta^{\lambda,\rho}(x)-\eta^\rho(x)=0$ for all $x>0$, by definition. 
Furthermore, $\xi$ particles cannot overpass $R^{\rho}_t$:
\begin{align}
\label{rost14}
 Y_t:= \max\{y:\xi_t(y)=1\}\le R^\rho_t.
\end{align}
\begin{figure}[h]
\centering
\includegraphics[
clip=true, 
scale=.8
]
{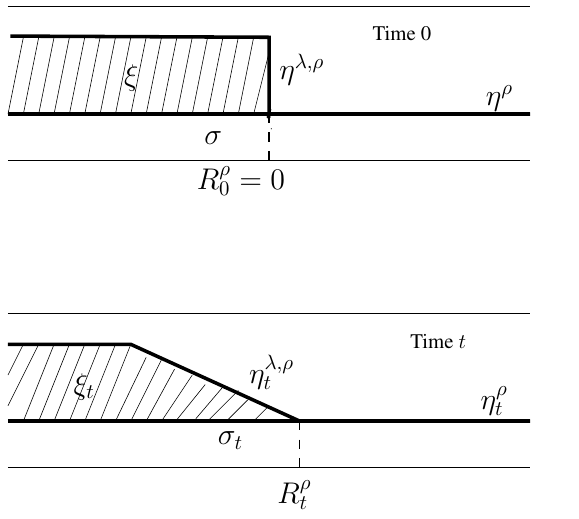} 
\caption{Macroscopic schema of the coupling to show \eqref{a382c}. There are no $\xi_t$ particles to the right of $R^\rho_t$ at time $t$.}
\label{fig81}
\end{figure}%
Let's show \eqref{rost14}. Since $\xi$ particles interact by exclusion among them, we have that the rightmost $\xi$ particle does not feel the $\xi$ particles to its left and hence $Y_t$ behaves as a second class particle for $\eta^\rho_t$, but with a random initial position $Y_0:=\max \{y\le 0: \xi_0(y)=1\}\le 0=R^\rho_0$. One is tempted to say that 2 second class particles with respect to $\eta^\rho_t$ can not overpass, but since we have a precise definition of $Y_0$ (in function of $\eta^\lambda$ and $\eta^\rho$) and $R^\rho_0$ (in function of $\eta^\rho$ and its tilded versions),  we have to explore the following three cases. (a) If $\eta^\rho(0)=0$ and $\eta^\lambda(0)=1$, then $Y_0=0=R^\rho_0$ and both particles will coincide at future times. (b) If $\eta^\rho(0)=\eta^\lambda(0)=1$, then  $Y_0<R^\rho_0$ and $Y_t< R^\rho_t$ for all times because $\sigma_t(R^\rho_t)=1$ and $Y_t$ cannot jump over $\sigma$ particles. (c) If $\eta^\rho(0)=\eta^\lambda(0)=0$, then $Y_0<R^\rho_t$ and $Y_t\le R^\rho_t$ for all times because if there is an arrow at $x$ at time $t$ and $Y_{t-}=x$, $R^\rho_{t-}=x+1$, then after the arrow the particles coalesce $Y_t=R^\rho_t=x+1$ and then continue together for ever. See the following tables for the (b) and (c) cases, the bold numbers correspond to the particles and holes involved in the definition of $Y_t$ or $R^\rho_t$. For instance the first row of case (b) means $\eta^\lambda_t(Y_t) =1$, $\eta^\lambda_t(R^\rho_t) =1$ and the second row of case (c) means  $\tilde\eta^\rho_t(Y_t) =0$, $\tilde\eta^\rho_t(R^\rho_t) =1$. More concisely, in case (b)  $\eta^\rho(0)=1$ and $R^\rho_t$ behaves as a first class particle for $Y_t$ so they exclude each other while in case (c)  $\eta^\lambda(0)=0$ and $R^\rho_t$ behaves as a hole for $Y_t$ so they can coalesce. 
\begin{align}
 (b)  \begin{array}{c|c|ccccc}
    &&&Y&R&\\
    \hline
   &\eta^\lambda&&\bold 1&1&\\[1mm]
   &\eta^\rho&&\mathbf 0& 1&\\[1mm]
   &\tilde\eta^\rho&& 0&\mathbf 1&\\[1mm]
    &\utilde\eta^\rho&&0&\mathbf 0&\\[1mm]
    \hline
      \end{array}
  \qquad(c)  \begin{array}{c|c|ccccc}
    &&&Y&R&\\
    \hline
   &\eta^\lambda&&\mathbf 1&0&\\[1mm]
    &\eta^\rho&&\mathbf 0&0&\\[1mm]
   &\tilde\eta^\rho&&1&\mathbf 1&\\[1mm]
   &\utilde\eta^\rho&& 0&\mathbf 0&\\[1mm]
    \hline
\end{array}
\end{align}


The first identity in \eqref{a382c} follows from \eqref{rost14}.
To get the second identity in \eqref{a382c} define
\[
(\sigma_t,\xi_t) := (\eta^{\lambda,\rho}_t,\eta^\lambda_t - \eta^{\lambda,\rho}_t)
\]
and use an argument analogous to the proof of  \eqref{rost14} to show that
\[
R^\lambda_t\ge \min\{y:\xi_t(y)=1\},
\]
that is,  $\xi_t(x)=0$ for $x< R^\lambda_t$. 

To show   \eqref{a382b} and \eqref{a382a} recall $\lambda\ge\alpha\ge\rho$ and observe that
\begin{align}
  \eta^{\lambda,\rho}_t(x)\le \eta^{\lambda,\alpha}_t(x) = \eta_t^\alpha(x),\qquad\hbox{for } x>R^\alpha_t \\
\eta^\alpha_t(x) = \eta^{\alpha,\rho}_t(x) \le \eta^{\lambda,\rho}_t(x),\qquad\hbox{for } x<R^\alpha_t,
\end{align}
where the inequalities hold by attractivity and the identities are \eqref{a382c}. 
\end{proof}

\begin{corollary}
  Let $\lambda\ge \alpha>\beta\ge \rho$. Then,
\begin{align}
P\Bigl(\liminf_{t\to\infty} \frac1t\sum_x\eta^{\lambda,\rho}_t(x) \one\{x\in ((1-2\alpha)t,(1-2\beta)t)\} &\ge 2(\alpha-\beta)\beta\Bigr)=1 \label{a383b}\\
P\Bigl(\limsup_{t\to\infty} \frac1t\sum_x\eta^{\lambda,\rho}_t(x) \one\{x\in ((1-2\alpha)t,(1-2\beta)t)\} &\le 2(\alpha-\beta)\alpha \Bigr)=1 \label{a383a}
\end{align}
\end{corollary}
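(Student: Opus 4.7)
The plan is to sandwich $\eta^{\lambda,\rho}_t$ between the stationary configurations $\eta^\alpha_t$ and $\eta^\beta_t$ on the interval $I_t:=((1-2\alpha)t,(1-2\beta)t)$ via the domination inequalities \eqref{a382b}--\eqref{a382a} of Lemma \ref{dom1}, and then to combine the law of large numbers for the isolated second class particles $R^\alpha_t$ and $R^\beta_t$ (Proposition \ref{isolated}) with a law of large numbers for the sum of a stationary configuration over a macroscopic interval.

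For the lower bound \eqref{a383b} I would apply \eqref{a382a} at level $\beta$: on $\{x<R^\beta_t\}$ one has $\eta^\beta_t(x)\le \eta^{\lambda,\rho}_t(x)$. Hence
\begin{align*}
\frac1t\sum_{x\in I_t}\eta^{\lambda,\rho}_t(x) \;\ge\; \frac1t\sum_{(1-2\alpha)t<x<(1-2\beta)t\wedge R^\beta_t}\eta^\beta_t(x).
\end{align*}
Proposition \ref{isolated} gives $R^\beta_t/t\to 1-2\beta$ a.s., so the endpoints of the sum on the right, divided by $t$, converge a.s. to $1-2\alpha$ and $1-2\beta$. Combined with the a.s. law of large numbers for the stationary configuration $\eta^\beta_t$ (whose marginal is Bernoulli product at density $\beta$ at each $t$ by Lemma \ref{nu-rho-invariant}), the right side tends a.s. to $2\beta(\alpha-\beta)$, yielding \eqref{a383b}.

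The upper bound \eqref{a383a} is symmetric via \eqref{a382b} at level $\alpha$: on $\{x>R^\alpha_t\}$ one has $\eta^{\lambda,\rho}_t(x)\le \eta^\alpha_t(x)$, while the complementary contribution from $I_t\cap\{x\le R^\alpha_t\}$ is $o(t)$ a.s., since $R^\alpha_t/t\to 1-2\alpha$ coincides with the left endpoint of $I_t$. The same a.s. LLN applied to $\eta^\alpha_t$ then delivers the asymptotic bound $2\alpha(\alpha-\beta)$.

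The main technical point is the a.s. law of large numbers $\frac1t\sum_{a_t<x<b_t}\eta^\beta_t(x)\to \beta(b-a)$ when the endpoints $a_t,b_t$ are random with $a_t/t\to a$ and $b_t/t\to b$ a.s. By monotonicity of the sum in the endpoints this reduces to the deterministic case $a_t=\lfloor at\rfloor$, $b_t=\lfloor bt\rfloor$, which in turn follows from a Hoeffding estimate for a Bernoulli$(\beta)$ sum at each integer time plus Borel--Cantelli; continuous-time interpolation is handled by the observation that the number of particles in a fixed spatial window can change only through boundary fluxes, contributing $O(1)$ a.s. between successive integer times.
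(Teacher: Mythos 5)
Your proof is correct and follows essentially the same route as the paper: sandwich $\eta^{\lambda,\rho}_t$ between the stationary configurations $\eta^\beta_t$ and $\eta^\alpha_t$ via \eqref{a382a}--\eqref{a382b}, replace the random endpoints $R^\alpha_t,R^\beta_t$ by $(1-2\alpha)t,(1-2\beta)t$ at an $o(t)$ cost using Proposition \ref{isolated}, and apply the law of large numbers for the stationary Bernoulli marginals. The only difference is that you spell out (correctly, via Hoeffding, Borel--Cantelli and boundary-flux interpolation) the almost-sure law of large numbers for stationary sums over growing windows, which the paper invokes without detail.
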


\begin{proof}
From \eqref{a382a},
\begin{align}
  \label{a388y}
  \sum_x\eta^\beta_t(x) \one\{x\in ( R^\beta_t,R^\alpha_t)\}\le \sum_x\eta^{\lambda,\rho}_t(x) \one\{x\in ( R^\beta_t,R^\alpha_t)\} \le \sum_x\eta^\alpha_t(x) \one\{x\in ( R^\beta_t,R^\alpha_t)\}.
\end{align}
\begin{figure}[h]
\centering
\includegraphics[
scale=.8
]
{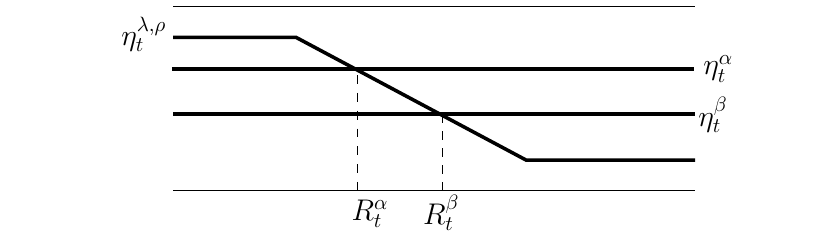}
\caption{Macroscopic schema of \eqref{a388y}}
\label{fig8}
\end{figure}%
From the first inequality in \eqref{a388y},
\begin{align}
  &\sum_x\eta^\beta_t(x) \one\{x\in ((1-2\beta)t,(1-2\alpha)t)\}\nonumber\\
&\qquad\le \sum_x\eta^{\lambda,\rho}_t(x) \one\{x\in ((1-2\beta)t,(1-2\alpha)t)\}\nonumber\\
&\qquad\quad + 2|R^\beta_t-(1-2\beta)t|+2|R^\alpha_t-(1-2\alpha)t)|
\end{align}
Divide by $t$, take $t\to\infty$ and use the law of large numbers for $\eta^\beta_t\sim\eta^\beta$ and for $R^\alpha_t,R^\beta_t$ to get \eqref{a383b}. The same argument using the second inequality in \eqref{a388y} shows \eqref{a383a}. 
\end{proof}

\paragraph{Proof of convergence of the density fields} 
Fix $r\in(1-2\lambda,1-2\rho)$ and use the bound~\eqref{a383a} with $\alpha=k/n$ and $\beta=(k-1)/n$ to obtain
\begin{align}
 &\limsup_t \frac1t\sum_{x\in(rt,(1-2\rho)t)}\eta^{\lambda,\rho}_t(x)\nonumber\\
& \qquad=\; \limsup_t \frac1t\sum_{k=1}^n \sum_x \eta^{\lambda,\rho}_t(x)\, \one\big\{x\in [\hbox{$t(1-2\frac{k}n),t(1-2\frac{k-1}n)$}]\cap[rt,(1-2\rho)t)\big\} \nonumber\\
&\qquad \le\;\sum_{k=1}^n  \frac{k}n \frac2n\, \one\big\{\hbox{$\rho\le \frac{k}n$}\le \hbox{$\frac{1-r}2$}\big\} 
\nonumber \\
&\qquad\mathop{\longrightarrow}_{n\to\infty}\; \int_\rho^{\frac{1-r}2} 2r'dr' \;=\; \Bigl(\frac{1-r}2\Bigr)^2-\rho^2 \;=\; \int_r^{1-2\rho} u(r',1) dr'.\nonumber
\end{align}
The same argument using \eqref{a383b} shows that
\begin{align}
 \liminf_t \frac1t\sum_{x\in (rt,(1-2\rho)t)}\eta^{\lambda,\rho}_t(x) &  \ge \int_r^{1-2\rho} u(r',1) dr'.\nonumber
\end{align}
This proves \eqref{aib} for intervals $(a,b)\subset (1-2\lambda,1-2\rho)$. Take now $a<1-2\lambda$ and use the second identity in \eqref{a382c} and the law of large numbers for $R^\lambda_t$ to conclude that 
\begin{align}
\label{df3}
 \lim_t \frac1t\sum_{x\in (at,(1-2\lambda)t)}\eta^{\lambda,\rho}_t(x) & = \lambda(1-2\lambda-a)=  \int_a^{1-2\lambda} u(r',1) dr'.
\end{align}
Take $b>1-2\rho$ and use the first identity in \eqref{a382c} and the law of large numbers for $R^\rho_t$ to conclude
\begin{align}
\label{df4}
 \lim_t \frac1t\sum_{x\in ((1-2\rho)t, bt)}\eta^{\lambda,\rho}_t(x) &= \rho(b-(1-2\rho))=  \int_{1-2\rho}^b u(r',1) dr'.\qquad 
\end{align}

\paragraph{Proof of density profile and local equilibrium} Take a finite integer set $A$ and recall $f_A(\eta) = \prod_{x\in A}\eta(x)$. Take  $\lambda\ge \alpha>\beta\ge \rho$. From \eqref{a382b}-\eqref{a382a} we have
\begin{align}
 B_t:=\big\{R^\alpha_t<rt+x< R^\beta_t, x\in A\big\}\subset \Bigl\{f_A(\tau_{rt}\eta^\alpha_t) \ge
  f_A(\tau_{rt}\eta^{\lambda,\rho}_t) \ge f_A(\tau_{rt}\eta^\beta_t)
  \Bigr\}.  \label{a385}
\end{align}

Hence, denoting $\one B$ the indicator function of the set $B$, we have
\[
E(f_A(\tau_{rt}\eta^\beta_t)\,\one B_t) \le  E(f_A(\tau_{rt}\eta^{\lambda,\rho}_t)\,\one B_t) \le E(f_A(\tau_{rt}\eta^\alpha_t)\,\one B_t).
\]
By the law of large numbers for $R^\alpha_t$ and $R^\beta_t$, for $r\in ((1-2\alpha),(1-2\beta))$ we have
$ \lim_t \one B_t=1$ a.s..
Hence, since $|f_A|\le 1$, for $r\in ((1-2\alpha),(1-2\beta))$, 
\[
\beta^{|A|} \le \liminf_t E(f_A(\tau_{rt}\eta^{\lambda,\rho}_t))\le \limsup_t E(f_A(\tau_{rt}\eta^{\lambda,\rho}_t)) \le \alpha^{|A|}.
\]
Take $\alpha\searrow \frac{1-r}2$ and $\beta\nearrow \frac{1-r}2$ to get
\[
\lim_t E(f_A(\tau_{rt}\eta^{\lambda,\rho}_t))=  \Bigl(\frac{1-r}2\Bigr)^{|A|}= u(r,1)^{|A|}.
\]
This proves local equilibrium for $r$ in the rarefaction fan $((1-2\lambda),(1-2\rho))$. For $r\ge 1-2\rho$ we know that $\eta^{\lambda,\rho}_t(x) = \eta^\rho_t(x)$ when $x>R^\rho_t$. This together with the law of large numbers for $R^\rho_t$ allows to conclude. The same argument holds for $r<1-2\lambda$.

\section{Notes and references}
\label{notes}
There are many papers about hydrodynamics of interacting particles systems. We just quote some reviews and books. De Masi and Presutti   \cite{MR1175626,MR757003}, Kipnis and Landim \cite{MR1707314} and Lebowitz, Presutti and Spohn \cite{MR971034}.

Lax (1972) shows the role of characteristics to solve the initial value problem of the Burgers equation. See also Evans \cite{MR1625845}.
Rezakhanlou \cite{MR1298939}  shows there that if the initial condition presents no decreasing discontinuity at $a$, then there is only one characteristic emanating from $a$. Rezakhanlou \cite{MR1326665} shows that a local perturbation of the initial condition of the Burgers equation behaves like the characteristics or a shock. 

The convergence of the hydrodynamic limit of the tasep to the Burgers equation has also different approaches and results. 
The local-equilibrium convergence \eqref{local-equilibrium} was proven by Liggett \cite{MR0410986,MR0445644} for the case
$r = 0$, before the connection between the process and the Burgers equation appeared. The first paper realizing this connection was Rost \cite{MR635270} who studied the rarefaction fan case. Rost uses the sub-additive ergodic theorem to show almost sure convergence of the density fields and then a comparison with stationary systems of queues to identify the limit and to show local equilibrium; see also Liggett's book \cite{MR2108619}. The result is generalized by Seppalainen \cite{MR1625007,MR1677061,MR1681094, timo-exclusion-notes}, who uses it to prove almost sure convergence of density fields for a large class of initial conditions. Proofs for more general initial profiles were provided by Benassi and Fouque \cite{MR885130}, Benassi, Fouque, Saada, and Vares \cite{MR1115810}. Andjel and Vares \cite{MR892931} prove convergence of the expectation of the density fields for general initial profiles for a class of processes including the tasep, without using subadditivity. Andjel, Ferrari and Siqueira \cite{MR2087959} extended the arguments of Rost to the case of non-nearest neighbor jumps. 

In dimension $d\ge 1$, Rezakhanlou \cite{MR1130693} proves convergence in probability of the density fields while Landim \cite{MR1245290} shows that this limit is enough to have local equilibrium. See also Landim \cite{MR1127714,MR1141248}. Recent strong approach to hydrodynamics without subadditivity can be found in the work of Bahadoran, Guiol, Ravishankar, and Saada \cite{MR2257649}, \cite{MR2578381}.

Ferrari, Kipnis and Saada \cite{MR1085334} and the author \cite{MR1142763,MR1263704} used the laws of large numbers for tagged and second class particles to show hydrodynamics in the shock case. The structure of Sections \ref{six} and \ref{seven} follows the survey \cite{MR1263704} but with an important simplification. Here we only use a law of large numbers for the tagged particle and attractive couplings to obtain all the other results while the arguments in \cite{MR1263704} also used asymptotic properties of the invariant measure for first and second class particles, a more refined property. 

\paragraph{Further results not discussed in this paper} Local equilibrium does not hold at the discontinuity points of the solution $u$. Wick \cite{MR802566}, Andjel, Bramson and Liggett \cite{MR945111}, De Masi, Kipnis, Presutti, and Saada \cite{MR1008228} have proven partial results. The author \cite{MR1263704} proved that the limit is a convex combination of the product measures with densities $\lambda$ and $\rho$, depending if the second class particle for $\eta^{\lambda,\rho}_t$ is to the right or left of $(1-\lambda-\rho)t$. 

Microscopic interfaces. A second class particle with respect to a product initial configuration with densities $\lambda<\rho$ to the left and right of the origin, respectively, sees at any time $t$ a measure that is absolutely continuous with respect to the product measure with a bounded Radom-Nikodim derivative. In fact, there exists an invariant measure for the process as seen from the second class particle which is absolutely continuous with respect to the product measure. This started with \cite{MR1085334,MR1142763,MR1263704}, then   Derrida, Janowsky, Lebowitz and Speer \cite{MR1251221} computed the measure, from where subsequent progress done by Ferrari Fontes Kohayakawa \cite{MR1298099} and  Angel \cite{MR2216456} permitted Ferrari and Martin \cite{MR2319708} to give a complete description of that measure in terms of the output of a discrete-time stationary MM1 queue. 

Diffusive fluctuations. 
The flux or current of particles along lines different from the characteristic have variance of order $t$ explicitly computed by Ferrari and Fontes \cite{MR1288133}, see also Ben Arous and Corwin \cite{MR2778798}.  For the second class particle in the shock also has variance of order $t$, computed in \cite{MR1278887,MR1257605}.

The flux of particles along a characteristic has non-diffusive fluctuations, while a second class particle in a translation invariant Bernoulli measure has super diffusive behavior \cite{MR1288133}. Ferrari and Spohn \cite{MR2217295} compute the equilibrium current fluctuations along the characteristic of order $t^{1/3}$ and show that the limit in distribution converges to the GUE Tracy-Widom distribution. For the growth process associated to the tasep Johansson \cite{MR1737991} computes limiting fluctuations of order $t^{1/3}$, and find the limit distribution, see  Prahoffer and Spohn \cite{MR1901953} and Ben Arous and Corwin \cite{MR2778798}.  Balasz, Cator and Seppalainen \cite{MR2268539} compute the order $t^{2/3}$ for the variance of the mentioned growth model.  

The second class particle in the rarefaction fan converges almost surely to a uniform random variable in $[-1,1]$. See Ferrari and Kipnis \cite{MR1340034} for convergence in distribution and Mountford and Guiol \cite{MR2134103} Ferrari, Pimentel and Martin~\cite{MR2150188,MR2498679} for a.s. convergence. Further results can be found in Ferrari, Gonçalves and Martin \cite{MR2572163} and Amir, Angel and Valko~\cite{MR2857238}.

\section*{Acknowledgments} 

I thank an anonymous referee for his extremely careful reading and useful comments. 
This paper started as a mini course given in the  CIMPA school \emph{Random processes and optimal configurations
in analysis}, Buenos Aires, July 2015, chaired by Jorge Antezana.

\bibliographystyle{abbrv}

\bibliography{bursep-cimpa}

\end{document}